\newtheorem{thm}{Theorem}[section]
\newtheorem{lemma}[thm]{Lemma}
\newtheorem{proposition}[thm]{Proposition}
\newtheorem{definition}[thm]{Definition}
\newtheorem{corollary}[thm]{Corollary}
\newtheorem{question}[thm]{Question}
\newtheorem{observation}[thm]{Observation}
\newtheorem*{definition*}{Definition}
\newcommand{\p}{\mathbb{P}}
\newcommand{\q}{\mathbb{Q}}
\newcommand{\ot}{\mathrm{ot}}
\newcommand{\dom}{\mathrm{dom}}
\newcommand{\Lim}{\mathrm{Lim}}
\DeclareMathOperator{\Measuring}{\textsf{Measuring}}
\let \o = \omega
\begin{document}

\title{Parametrized Measuring and Club Guessing}

\author{David Asper\'{o} and John Krueger}

\address{David Asper\'o \\ School of Mathematics \\ University of East Anglia \\ Norwich NR4 7TJ, UK}
\email{d.aspero@uea.ac.uk}

\address{John Krueger \\ Department of Mathematics \\ 
	University of North Texas \\
	1155 Union Circle \#311430 \\
	Denton, TX 76203}
\email{jkrueger@unt.edu}

\date{August 2018; revised July 2019}

\thanks{The first author acknowledges support of EPSRC Grant EP/N032160/1. 
	The second author was partially supported by 
	the National Science Foundation Grant
	No. DMS-1464859.}

\thanks{2010 \emph{Mathematics Subject Classification:} 
	Primary 03E05, 03E35; Secondary 03E57.}

\thanks{\emph{Key words and phrases.} Strong Measuring, Club Guessing, MRP, BPFA}

\begin{abstract}
	We introduce \emph{Strong Measuring}, a maximal strengthening of J.\ T.\ Moore's 
	Measuring principle, which asserts that every collection of fewer than continuum many 
	closed bounded subsets of $\omega_1$ is measured by some club subset of $\omega_1$. 
	The consistency of Strong Measuring with the negation of \textsf{CH} is shown, 
	solving an open problem from \cite{aspero3} about parametrized measuring principles. 
	Specifically, we prove that Strong Measuring follows from \textsf{MRP} together with Martin's Axiom 
	for $\sigma$-centered forcings, as well as from \textsf{BPFA}. 
	We also consider strong versions of Measuring in the absence of the Axiom of Choice.
\end{abstract}

\maketitle

Club guessing principles at $\omega_1$ are well--studied natural weakenings of Jensen's $\diamondsuit$ principle. 
Presented in a general form, they assert the existence of a sequence 
$\vec C = \langle c_\alpha \,:\, \alpha\in \omega_1\cap \Lim\rangle$, 
where each $c_\alpha$ is a club of $\alpha$, such that $\vec C$ \emph{guesses} clubs of $\omega_1$ in some suitable sense. 
$\vec C$ guessing a club $D$ of $\omega_1$ usually means that there is some (equivalently, stationarily many) 
$\delta\in D$ such that $c_\delta\cap D$ is a suitably large subset of $c_\delta$; for example, we could require that 
$c_\delta\subseteq D$, in which case the resulting statement is called \emph{club guessing}, or that $c_\delta \cap D$ is cofinal in $\delta$, 
in which case we call the resulting statement \emph{very weak club guessing}.

Unlike the case of their versions at cardinals higher than $\omega_1$, 
for which there are non--trivial positive \textsf{ZFC} theorems (see, for example, \cite{cardinalarithmetic}), 
club guessing principles at $\omega_1$ are independent of \textsf{ZFC}. 
On the one hand, all of these principles obviously follow from $\diamondsuit$, 
and hence they hold in $L$, and they can always be forced by countably closed forcing. 
On the other hand, classical forcing axioms at the level of $\omega_1$, such as the Proper Forcing Axiom (\textsf{PFA}), 
imply the failure of even the weakest of these principles. It should nevertheless be noted that Martin's Axiom + 
$\neg \textsf{CH}$ is 
com\-pa\-tible with Club Guessing. 
This is because Martin's Axiom can always be forced by a c.c.c.\ forcing, and the fact that 
every club of $\omega_1$ in a generic extension via a c.c.c.\ forcing contains a club of $\omega_1$ from the ground model 
implies that a club--guessing sequence from the ground model remains club--guessing in the extension. 
(On the other hand, this is of course not the case for $\diamondsuit$ since the negation of \textsf{CH} violates $\diamondsuit$.)

\emph{Measuring} is a particularly strong failure of Club Guessing due to J.\ T.\ Moore (\cite{moore2}). 
Let $X$ and $Y$ be countable subsets of $\omega_1$ with the same supremum $\delta$. 
We say that 
\emph{$X$ measures $Y$} if there exists $\beta<\delta$ such that $X \setminus\beta$ is either contained in, or disjoint from, $Y$. \emph{Measuring} is the statement that for any sequence $\langle c_\alpha \,:\, \alpha\in\omega_1\cap\Lim\rangle$, where each $c_\alpha$ is a closed subset of $\alpha$, there exists a club $D\subseteq\omega_1$ such that for all limit points $\delta\in D$ 
of $D$, $D \cap \delta$ measures $c_\delta$.

Measuring can be viewed as a strong negation of Club Guessing since, as is easy to see, it implies the failure of Very Weak Club Guessing.  
Measuring follows from the Mapping Reflection Principle (\textsf{MRP}), 
and therefore from $\textsf{PFA}$, and it can be forced over any model of 
$\textsf{ZFC}$.

From Measuring as a vantage point, one can attempt to consider even stronger failures of Club Guessing. In this vein, the following parametrized family of strengthenings of Measuring was considered in \cite{aspero3}. 

\begin{definition*}\label{parametrized-measuring}
	For a cardinal $\kappa$, let $\textsf{Measuring}_{{<}\kappa}$ denote the statement that whenever $\vec{\mathcal C}=\langle \mathcal C_\alpha\,:\,\alpha\in \omega_1\cap\Lim\rangle$ is a sequence such that each $\mathcal C_\alpha$ is
	a family of fewer than $\kappa$ many closed subsets of $\alpha$, there exists a club $D\subseteq \omega_1$ with the property that for every limit point $\delta$ of $D$ and every $c\in\mathcal C_\delta$, $D\cap\delta$ measures $c$. For a cardinal $\lambda$, let $\Measuring_\lambda$ denote $\Measuring_{{<}\lambda^+}$. 
\end{definition*}

In the situation given by the above definition, we say that $D$ \emph{measures} $\vec{\mathcal C}$. 
We also define \emph{Strong Measuring} to be the statement $\Measuring_{{<}2^{\omega}}$.

In the present article we contribute to the body of information on Measuring and related strong failures of Club Guessing (see also \cite{moore2}, \cite{aspero5}, \cite{aspero}, \cite{aspero4}, and \cite{aspero3}). 
One of the questions left unresolved in \cite{aspero3} is whether $\Measuring_{\omega_1}$ is consistent at all. Answering this question was the motivation for the work in the present article. 
Our main result is that Strong Measuring + $\neg \textsf{CH}$ is consistent. In fact, this statement follows from 
	$\textsf{MRP}$ + Martin's Axiom for the class of $\sigma$-centered posets, and also from $\textsf{BPFA}$.\footnote{We can also prove the consistency of Strong Measuring with the continuum being arbitrarily large. This result will appear in a sequel to the present article.}
	We also show the failure, in $\textsf{ZFC}$, of $\Measuring_\kappa$, where $\kappa$ is among some of the classical cardinal characteristics of the continuum. Finally, we consider very strong versions of Measuring in contexts in which the Axiom of Choice fails.  

\section{Background}

We review some background material and notation which is needed for understanding the paper. 
Let $\mathfrak c$ denote the cardinality of the continuum $2^\omega$. 
A set $S \subseteq [\omega]^\omega$ is a \emph{splitting family} 
if for any infinite set $x \subseteq \omega$, there exists $A \in S$ such that $A$ \emph{splits} $x$ 
in the sense that both $x \cap A$ and $x \setminus A$ are infinite. 
The \emph{splitting number} $\mathfrak s$ is the least cardinality of some splitting family. 
Given functions $f, g : \omega \to \omega$, we say that $g$ dominates $f$ if for all $n < \omega$, 
$f(n) < g(n)$. We say that $g$ eventually dominates $f$ if there is some $m<\omega$ such that $f(n)<g(n)$ for all $n>m$.
A family $B \subseteq \omega^\omega$ is \emph{bounded} if there exists a function $g \in \omega^\omega$ 
which eventually dominates every member of $B$, and otherwise it is \emph{unbounded}. 
The \emph{bounding number} $\mathfrak b$ is the least cardinality of some unbounded family. 
Both cardinal characteristics $\mathfrak s$ and $\mathfrak b$ are uncountable.

Let $\p$ be a forcing poset. 
A set $X \subseteq \p$ is \emph{centered} if every finite subset of $X$ has a lower bound. 
We say that $\p$ is \emph{$\sigma$-centered} if it is a union of countably many centered sets. 
\emph{Martin's Axiom for $\sigma$-centered forcings} ($\textsf{MA}$($\sigma$-centered)) is the statement 
that for any $\sigma$-centered forcing $\p$ and any collection of fewer than $\mathfrak c$ many 
dense subsets of $\p$, there exists a filter on $\p$ which meets each dense set in the collection. 
More generally, let $\mathfrak{m}$($\sigma$-centered) be the least cardinality of a collection of dense 
subsets of some $\sigma$-centered forcing poset for which there does not exist a filter which meets 
each dense set in the collection. 
Note that $\textsf{MA}$($\sigma$-centered) is equivalent to the statement that 
$\mathfrak{m}$($\sigma$-centered) equals $\mathfrak{c}$.

The \emph{Bounded Proper Forcing Axiom} (\textsf{BPFA}) is the statement that whenever 
$\p$ is a proper forcing and $\langle A_i : i < \omega_1 \rangle$ is a sequence of 
maximal antichains of $\p$ each of size at most $\omega_1$, then there exists a 
filter on $\p$ which meets each $A_i$ (\cite{BPFA}). 
We note that \textsf{BPFA} implies $\mathfrak c = \omega_2$ (\cite[Section 5]{moore}). 
It easily follows that \textsf{BPFA} implies Martin's Axiom, and in particular, 
implies $\textsf{MA}$($\sigma$-centered). 
The forcing axiom \textsf{BPFA} is equivalent to the statement that for any proper forcing poset $\p$ 
and any $\Sigma_1$ statement $\Phi$ with a parameter from $H(\omega_2)$, 
if $\Phi$ holds in a generic extension by $\p$, then $\Phi$ holds in the ground model (\cite{bagaria}).

An \emph{open stationary set mapping} for an uncountable set 
$X$ and regular cardinal $\theta > \omega_1$ 
is a function $\Sigma$ whose domain is the collection of all countable elementary substructures $M$ of 
$H(\theta)$ with $X \in M$, such that for all such $M$, 
$\Sigma(M)$ is an open, $M$-stationary subset of $[X]^\omega$. 
By \emph{open} we mean in the Ellentuck topology on $[X]^\omega$, and \emph{$M$-stationary} means 
meeting every club subset of $[X]^\omega$ which is a member of $M$ 
(see \cite{moore} for the complete details). 
In this article, we are only concerned with these ideas in the simplest case that $X = \omega_1$ 
and for each $M \in \dom(\Sigma)$, $\Sigma(M) \subseteq \omega_1$. 
In this case, being open is equivalent to being open in the topology on $\omega_1$ with basis 
the collection of all open intervals of ordinals, and being $M$-stationary is equivalent to meeting every 
club subset of $\omega_1$ in $M$.

For an open stationary set mapping $\Sigma$ for $X$ and $\theta$, a \emph{$\Sigma$-reflecting sequence} is 
an $\in$-increasing and continuous sequence $\langle M_i : i < \omega_1 \rangle$ 
of countable elementary substructures of $H(\theta)$ containing $X$ as a member 
satisfying that for all limit ordinals $\delta < \omega_1$, 
there exists $\beta < \delta$ so that for all $\beta \le \xi < \delta$, 
$M_\xi \cap X \in \Sigma(M_\delta)$. 
The \emph{Mapping Reflection Principle} (\textsf{MRP}) is the statement that for any 
open stationary set mapping $\Sigma$, there exists a $\Sigma$-reflecting sequence. 
We will use the fact that for any open stationary set mapping $\Sigma$, there exists a proper forcing which adds a 
$\Sigma$-reflecting sequence (\cite[Section 3]{moore}). 
Consequently, \textsf{MRP} follows from \textsf{PFA}.

\section{Parametrized Measuring and Club Guessing}

Let $X$ and $Y$ be countable subsets of $\omega_1$ with the same supremum $\delta$. 
We say that $X$ \emph{measures} $Y$ if there exists $\beta < \delta$ 
such that $X \setminus \beta$ is either contained in, or disjoint from, $Y$. 
\emph{Measuring} is the statement that for any sequence $\langle c_\alpha : \alpha \in \omega_1 \cap \Lim \rangle$, 
where each $c_\alpha$ is a closed and cofinal subset of $\alpha$, 
there exists a club $D \subseteq \omega_1$ such that for all limit points $\alpha$ of $D$, 
$D \cap \alpha$ measures $c_\alpha$.

The next two results are due to J.\ T.\ Moore (\cite{moore2}).

\begin{thm}
	\textsf{MRP} implies Measuring.
\end{thm}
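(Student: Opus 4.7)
The plan is to apply \textsf{MRP} to an open stationary set mapping $\Sigma$ built directly from $\vec c = \langle c_\alpha : \alpha \in \omega_1 \cap \Lim \rangle$. Fix a regular cardinal $\theta$ with $\vec c \in H(\theta)$, and for each countable elementary submodel $M \prec H(\theta)$ with $\vec c \in M$ set $\delta_M := M \cap \omega_1$; note that $c_{\delta_M}$ is a closed cofinal subset of $\delta_M$.

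The most natural candidate is $\Sigma(M) := \delta_M \setminus c_{\delta_M}$. It is open in $\omega_1$, since $\delta_M$ is open in the order topology on $\omega_1$ and $c_{\delta_M}$ is closed in $\delta_M$. Its usefulness is the following. Assume \textsf{MRP} yields a $\Sigma$-reflecting sequence $\langle M_\xi : \xi < \omega_1 \rangle$. Set $\delta_\xi := M_\xi \cap \omega_1$ and $D := \{\delta_\xi : \xi < \omega_1\}$; by continuity of the sequence, $D$ is a club of $\omega_1$. For each limit $\delta < \omega_1$ the reflecting property supplies some $\beta < \delta$ such that $\delta_\eta \in \Sigma(M_\delta) = \delta_\delta \setminus c_{\delta_\delta}$ for all $\beta \le \eta < \delta$. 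Hence the tail $\{\delta_\eta : \beta \le \eta < \delta\}$ of $D \cap \delta_\delta$ is disjoint from $c_{\delta_\delta}$, which gives the disjointness clause in the definition of measuring at the limit point $\delta_\delta$ of $D$.

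The main obstacle is proving that $\Sigma(M)$ is $M$-stationary, i.e., that for every club $C$ of $\omega_1$ with $C \in M$ one has $C \cap \delta_M \not\subseteq c_{\delta_M}$. In principle this can fail --- $c_{\delta_M}$ could contain the trace $C \cap \delta_M$ of some specific $C \in M$, for instance when $c_{\delta_M}$ is a final segment of $\delta_M$ or happens to align with some canonical club in $M$. To handle this I would perform a case split in the definition of $\Sigma$: when $\delta_M \setminus c_{\delta_M}$ is $M$-stationary, use it; otherwise, picking a witnessing $C \in M$ canonically (via a fixed well-ordering of $H(\theta)$) with $C \cap \delta_M \subseteq c_{\delta_M}$, redefine $\Sigma(M)$ as a suitable open set whose reflected tail is forced to lie inside $c_{\delta_M}$, thereby producing the inclusion clause of measuring instead. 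Once $\Sigma$ has been constructed so that each branch of the case split corresponds to one of the two clauses of measuring, \textsf{MRP} delivers the reflecting sequence and the argument above (mutatis mutandis in the second case) shows that $D$ measures $\vec c$.
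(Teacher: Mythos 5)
Your overall architecture is the right one, and it is essentially the argument the paper uses (Theorem 2.1 is stated without proof, but Proposition 3.3 is exactly this construction generalized to families of closed sets; your $\Sigma$ is its specialization to singletons). The first case of your case split is correct as written. The gap is in the second case. When $\delta_M \setminus c_{\delta_M}$ is not $M$-stationary, you propose to redefine $\Sigma(M)$ as ``a suitable open set whose reflected tail is forced to lie inside $c_{\delta_M}$.'' There need not be any such set: $c_{\delta_M}$ is closed and may have empty interior (e.g.\ if $c_{\delta_M}$ is the set of limit ordinals below $\delta_M$), so no nonempty open subset of $\delta_M$ is contained in $c_{\delta_M}$, let alone an $M$-stationary one. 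The reflection property of $\Sigma$ cannot be the mechanism that produces the inclusion clause.

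The correct move in that case is to make $\Sigma(M)$ do no work at all --- set $\Sigma(M) := \delta_M$, which is trivially open and $M$-stationary --- and obtain the inclusion clause from the continuity of the reflecting sequence together with elementarity. Concretely: non-$M$-stationarity of $\delta_M \setminus c_{\delta_M}$ gives a club $C \in M$ with $C \cap \delta_M \subseteq c_{\delta_M}$. At a limit point $\delta_\delta$ of $D$ with $M := M_\delta$, continuity of $\langle M_\xi : \xi < \omega_1 \rangle$ yields $\beta < \delta$ with $C \in M_\beta$; then for every $\eta$ with $\beta \le \eta < \delta$ we have $C \in M_\eta$, so by elementarity $\delta_\eta = M_\eta \cap \omega_1 \in C$, hence $\delta_\eta \in c_{\delta_\delta}$. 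Thus the tail $(D \cap \delta_\delta) \setminus \delta_\beta$ is contained in $c_{\delta_\delta}$, which is the inclusion clause. With this replacement your proof goes through and coincides with the paper's.
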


\begin{thm}
	\textsf{BPFA} implies Measuring.
\end{thm}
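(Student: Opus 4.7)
The natural approach is to apply the $\Sigma_1$-absoluteness reformulation of $\textsf{BPFA}$ recalled in the background section, due to Bagaria. Fix a sequence $\vec c = \langle c_\alpha : \alpha \in \omega_1 \cap \Lim \rangle$ with each $c_\alpha$ a closed cofinal subset of $\alpha$; I want to find a club $D \subseteq \omega_1$ measuring $\vec c$. Note that $\vec c \in H(\omega_2)$, and that any candidate $D \subseteq \omega_1$ also lies in $H(\omega_2)$. The conditions ``$D$ is a club of $\omega_1$'' and ``for every limit point $\alpha$ of $D$, $D \cap \alpha$ measures $c_\alpha$'' are each absolute between transitive models containing $\vec c$ and $D$, so the existence of such a $D$ is a $\Sigma_1$ statement about $\vec c$ over $H(\omega_2)$.

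By Bagaria's characterization it therefore suffices to exhibit, for this fixed $\vec c$, a proper forcing poset $\p_{\vec c}$ in whose generic extension a measuring club for $\vec c$ exists. To produce $\p_{\vec c}$ I would piggyback on the previous theorem. Inspecting Moore's proof that $\textsf{MRP}$ implies Measuring, one sees that for each individual $\vec c$ the measuring club is read off from a $\Sigma_{\vec c}$-reflecting sequence, where $\Sigma_{\vec c}$ is a specific open stationary set mapping on countable elementary substructures of $H(\theta)$ constructed directly from $\vec c$. Roughly, $\Sigma_{\vec c}(M)$ should consist of those $x \in [\omega_1]^\omega$ that locally force the eventual trace of a potential club to lie on the correct side of $c_{\sup(M \cap \omega_1)}$.

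Given $\Sigma_{\vec c}$, the excerpt supplies a proper forcing $\p_{\vec c}$ adding a $\Sigma_{\vec c}$-reflecting sequence $\langle M_i : i < \omega_1 \rangle$, from which the desired measuring club $D$ (essentially the closure of $\{ M_i \cap \omega_1 : i < \omega_1 \}$) is definable. Thus the $\Sigma_1$ statement ``there exists a club of $\omega_1$ measuring $\vec c$'' holds in the generic extension by a proper forcing, and $\textsf{BPFA}$ reflects it to $V$. Since $\vec c$ was arbitrary, Measuring holds.

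The only nontrivial step is the localization in the second paragraph: namely, extracting from the proof of $\textsf{MRP} \Rightarrow$ Measuring the fact that a \emph{single} open stationary set mapping, depending only on the fixed $\vec c$, is enough to deliver a measuring club for $\vec c$, so that one does not need to appeal to $\textsf{MRP}$ in full but only to one instance of its forcing realization. Once this is confirmed, the rest of the argument is a routine verification of the $\Sigma_1$ complexity of the target assertion together with an appeal to Bagaria's theorem.
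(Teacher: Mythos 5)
Your argument is correct and is essentially the route the paper itself takes: Theorem 2.2 is only cited to Moore, but the paper's proof of the stronger Corollary 3.5 (\textsf{BPFA} implies Strong Measuring) is exactly your scheme --- the ``localization'' you flag as the nontrivial step is supplied there by Proposition 3.3, which builds a single open stationary set mapping $\Sigma$ from the given sequence so that any $\Sigma$-reflecting sequence yields a measuring club, after which properness of the reflecting-sequence forcing and Bagaria's $\Sigma_1$-characterization of \textsf{BPFA} finish the proof as you describe.
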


We now describe parametrized forms of measuring which were introduced in \cite{aspero3}. 
Let $\vec{\mathcal C} = \langle \mathcal C_\alpha : \alpha \in \omega_1 \cap \textrm{Lim} \rangle$ 
be a sequence such that each $\mathcal C_\alpha$ is a collection of 
closed and cofinal subsets of $\alpha$. 
A club $D \subseteq \omega_1$ is said to \emph{measure} $\vec{\mathcal C}$ if for all 
$\alpha \in \lim(D)$ and all $c \in \mathcal C_\alpha$, $D \cap \alpha$ measures $c$.
	
\begin{definition}
	For a cardinal $\kappa$, let \textsf{Measuring}$_{< \kappa}$ 
	denote the statement that whenever 
	$\vec{\mathcal C} = \langle \mathcal C_\alpha : \alpha \in \omega_1 \cap \textrm{Lim} \rangle$ 
	is a sequence such that each $\mathcal C_\alpha$ is a collection of fewer than $\kappa$ many 
	closed and cofinal subsets of $\alpha$, then there exists a club $D \subseteq \omega_1$ 
	which measures $\vec{\mathcal C}$. 	
	For a cardinal $\lambda$, let $\textsf{Measuring}_\lambda$ denote $\textsf{Measuring}_{< \lambda^+}$. 
	\end{definition}

Observe that the principle Measuring is the same as $\textsf{Measuring}_1$. 
If $\kappa < \lambda$, then clearly \textsf{Measuring}$_{< \lambda}$ implies 
\textsf{Measuring}$_{<\kappa}$. 
It is easy to see that \textsf{Measuring}$_{\mathfrak c}$ is false. 

\begin{definition}
	\emph{Strong Measuring} is the statement that \textsf{Measuring}$_{<\mathfrak c}$ holds.
	\end{definition}

Since the intersection of countably many clubs in $\omega_1$ is club, 
Measuring easily implies \textsf{Measuring}$_\omega$. 
In particular, Measuring together with \textsf{CH} implies Strong Measuring. 
We will prove in Section 3 the consistency of Strong Measuring together with 
$\neg \textsf{CH}$. 
We also observe at the end of that section that Measuring does not imply \textsf{Measuring}$_{\omega_1}$.

\begin{proposition}[\cite{aspero3}]
	\textsf{Measuring}$_{\mathfrak s}$ is false.
	\end{proposition}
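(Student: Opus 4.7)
The plan is to construct in ZFC a sequence $\vec{\mathcal C}$ with $|\mathcal C_\alpha|\le\mathfrak s$ for every countable limit ordinal $\alpha$, and to verify that no club in $\omega_1$ can measure it. The heuristic is that a candidate measuring club $D$ gives rise, at suitable $\alpha$, to an infinite subset of $\omega$ encoding how $D\cap\alpha$ sits inside a fixed cofinal $\omega$-sequence; applying the splitting family to this trace then disrupts any possible measuring.

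I would begin by fixing a splitting family $\{A_i:i<\mathfrak s\}\subseteq[\omega]^\omega$. For each countable limit ordinal $\alpha$, choose an increasing sequence $\langle e_\alpha(n):n<\omega\rangle$ cofinal in $\alpha$ whose range consists entirely of \emph{successor} ordinals (for instance, start from any cofinal $\omega$-sequence $\xi_n\uparrow\alpha$ and take $e_\alpha(n)=\xi_n+1$). For each $i<\mathfrak s$ set
\[
 c_\alpha^{i}\;=\;\bigcup_{n\in A_i}\bigl[e_\alpha(n),\,e_\alpha(n+1)\bigr],
\]
which is a closed cofinal subset of $\alpha$, and let $\mathcal C_\alpha=\{c_\alpha^{i}:i<\mathfrak s\}$.

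Toward a contradiction, assume $D\subseteq\omega_1$ is a club measuring $\vec{\mathcal C}$. Since $\lim(\lim(D))$ is a club in $\omega_1$, fix $\alpha\in\lim(\lim(D))$. Then $\lim(D)\cap\alpha$ is cofinal in $\alpha$ and consists only of limit ordinals, so it is disjoint from the successor set $\ran(e_\alpha)$; each such element above $e_\alpha(0)$ lies strictly inside some open interval $(e_\alpha(n),e_\alpha(n+1))$. Consequently
\[
 Y_\alpha\;=\;\{\,n<\omega:D\cap(e_\alpha(n),e_\alpha(n+1))\ne\emptyset\,\}
\]
is infinite, and the splitting property produces $i<\mathfrak s$ with both $Y_\alpha\cap A_i$ and $Y_\alpha\setminus A_i$ infinite. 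Any witness $d_n\in D\cap(e_\alpha(n),e_\alpha(n+1))$ with $n\in Y_\alpha\cap A_i$ lies in $c_\alpha^{i}$; while any witness with $n\in Y_\alpha\setminus A_i$ is strictly interior to $(e_\alpha(n),e_\alpha(n+1))$ and so cannot lie in $[e_\alpha(m),e_\alpha(m+1)]$ for any $m\ne n$, giving $d_n\notin c_\alpha^{i}$. Thus $D\cap\alpha$ has cofinally many elements both in and out of $c_\alpha^{i}$, contradicting the measuring of $c_\alpha^{i}$.

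The point I expect to require most care is guaranteeing that $Y_\alpha$ is infinite, because this is the step where the splitting family actually enters the argument. A naive choice of $e_\alpha$ would leave open the possibility that $D\cap\alpha$ avoids the open gaps between consecutive $e_\alpha(n)$'s (e.g., by being concentrated on $\ran(e_\alpha)$, or off it in a bounded way), and forcing such a match between $D$ and $e_\alpha$ would amount to a form of very weak club guessing, which is not available in ZFC and indeed is refuted by Measuring. Restricting the $e_\alpha(n)$ to successor ordinals neatly dodges this: the limit ordinals of $\lim(D)\cap\alpha$ cofinally below $\alpha$ automatically miss $\ran(e_\alpha)$ and so populate the required open intervals for free.
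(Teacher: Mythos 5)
Your proof is correct and follows essentially the same route as the paper's: closed sets built as unions of intervals along a fixed cofinal $\omega$-sequence through $\alpha$, indexed by a splitting family, with the splitting property applied to the trace of $D$ on those intervals. The only difference is the technical device for making the trace infinite: the paper uses half-open intervals $(f_\alpha(n),f_\alpha(n+1)]$, which partition a tail of $\alpha$ so that the trace is automatically infinite for every $\alpha\in\lim(D)$, whereas you use closed intervals with successor-ordinal endpoints and pass to $\alpha\in\lim(\lim(D))$ to secure the same point.
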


\begin{proof}
	Fix a splitting family $S$ of cardinality $\mathfrak s$. 
	For each limit ordinal $\alpha < \omega_1$, fix a function $f_\alpha : \omega \to \alpha$ which is 
	increasing and cofinal in $\alpha$. 
	For each $A \in S$, let $c_{\alpha,A} = \bigcup \{ (f_\alpha(n),f_\alpha(n+1)] : n \in A \}$, which is 
	clearly closed and cofinal in $\alpha$. 
	Let $\mathcal C_\alpha := \{ c_{\alpha,A} : A \in S \}$.  
	Then $\vec{\mathcal C} := \langle \mathcal{C}_\alpha : \alpha \in \omega_1 \cap \Lim \rangle$ is a sequence 
	such that for each $\alpha$, $\mathcal{C}_\alpha$ is a collection of at most $\mathfrak s$ many closed 
	and cofinal subsets of $\alpha$.
	
	Let $D \subseteq \omega_1$ be a club. 
	Fix $\alpha \in \lim(D)$. 
	We will show that there exists a member of $\mathcal C_\alpha$ which $D \cap \alpha$ does not measure. 
	Define $x := \{ n < \omega : D \cap (f_\alpha(n),f_{\alpha}(n+1)] \ne \emptyset \}$. 
	Since $\alpha \in \lim(D)$, $x$ is infinite. 
	As $S$ is a splitting family, we can fix $A \in S$ which splits $x$. 
	So both $x \cap A$ and $x \setminus A$ are infinite. 
	We claim that $D \cap \alpha$ does not measure $c_{\alpha,A}$.

	Suppose for a contradiction that for some $\beta < \alpha$, 
	$(D \cap \alpha) \setminus \beta$ is either a subset of, or disjoint from, $c_{\alpha,A}$. 
	Since $A \cap x$ is infinite, we can fix $n \in A \cap x$ such that $f_\alpha(n) > \beta$. 
	Then $n \in x$ implies that $D \cap (f_\alpha(n),f_{\alpha}(n+1)] \ne \emptyset$, 
	and $n \in A$ implies that $(f_{\alpha}(n),f_{\alpha}(n+1)] \subseteq c_{\alpha,A}$. 
	It follows that $(D \cap \alpha) \setminus \beta$ meets $c_{\alpha,A}$. 
	By the choice of $\beta$, this implies that $(D \cap \alpha) \setminus \beta$ is a subset of $c_{\alpha,A}$. 
	But $x \setminus A$ is also infinite, so we can fix $m \in x \setminus A$ such that $f_{\alpha}(m) > \beta$. 
	Then $m \in x$ implies that 
	$D \cap (f_\alpha(m),f_\alpha(m+1)] \ne \emptyset$, and $m \notin A$ implies that 
	$(f_{\alpha}(m),f_{\alpha}(m+1)]$ is disjoint from $c_{\alpha,A}$. 
	Thus, there is a member of $(D \cap \alpha) \setminus \beta$ which is not in $c_{\alpha,A}$, which is a contradiction.
\end{proof}

We will prove later in this section that \textsf{Measuring}$_{\mathfrak b}$ is also false.

We now turn to parametrized club guessing. 
We recall some standard definitions. 
Consider a sequence $\vec L = \langle L_\alpha : \alpha \in \omega_1 \cap \Lim \rangle$, where each $L_\alpha$ 
is a cofinal subset of $\alpha$ with order type $\omega$ (that is, a \emph{ladder system}). 
We say that $\vec L$ is a \emph{club guessing sequence}, \emph{weak club guessing sequence}, or 
\emph{very weak club guessing sequence}, respectively, if for every club $D \subseteq \omega_1$, there exists 
a limit ordinal $\alpha < \omega_1$ such that:
\begin{enumerate}
	\item $L_\alpha \subseteq D$,
	\item $L_\alpha \setminus D$ is finite, or
	\item $L_\alpha \cap D$ is infinite, respectively.
\end{enumerate}
We say that \emph{Club Guessing}, \emph{Weak Club Guessing}, or \emph{Very Weak Club Guessing} 
holds, respectively, if there exists a club guessing sequence, a weak club guessing sequence, 
or a very weak club guessing sequence, respectively. 
It is well known that Measuring implies the failure of Very Weak Club Guessing (see Proposition 2.8 below).

\begin{definition}
	Let $\vec{\mathcal L} = \langle \mathcal L_\alpha : \alpha \in \omega_1 \cap \Lim \rangle$ 
	be a sequence where each $\mathcal L_\alpha$ is a non--empty 
	collection of cofinal subsets of $\alpha$ with order type $\omega$. 
	The sequence $\vec{\mathcal L}$ is said to be 
	a \emph{club guessing sequence}, \emph{weak club guessing sequence}, or 
	\emph{very weak club guessing sequence}, respectively, if for every club $D \subseteq \omega_1$, there exists 
	a limit ordinal $\alpha < \omega_1$ and some $L \in \mathcal L_\alpha$ such that:
	\begin{enumerate}
		\item $L \subseteq D$,
		\item $L \setminus D$ is finite, or 
		\item $L \cap D$ is infinite, respectively.
	\end{enumerate}
\end{definition}

\begin{definition}
	For a cardinal $\kappa$, let \textsf{CG}$_{< \kappa}$, 
	\textsf{WCG}$_{<\kappa}$, and \textsf{VWCG}$_{<\kappa}$, respectively, be the statements that 
	there exists a club guessing sequence, weak club guessing sequence, or very weak club guessing sequence 
	$\langle \mathcal L_\alpha : \alpha \in \omega_1 \cap \Lim \rangle$, respectively, 
	such that for each $\alpha$, $|\mathcal L_\alpha| < \kappa$. 
	Let \textsf{CG}$_{\kappa}$, 
	\textsf{WCG}$_{\kappa}$, and \textsf{VWCG}$_{\kappa}$ denote the statements 
	\textsf{CG}$_{< \kappa^+}$, 
	\textsf{WCG}$_{< \kappa^+}$, and \textsf{VWCG}$_{< \kappa^+}$, respectively.
\end{definition}

Clearly, if $\kappa < \lambda$, then 
\textsf{CG}$_{< \kappa}$ implies \textsf{CG}$_{<\lambda}$, and similarly with 
\textsf{WCG} and \textsf{VWCG}. 
Observe that Club Guessing, Weak Club Guessing, and Very Weak Club Guessing are equivalent to 
\textsf{CG}$_{1}$, \textsf{WCG}$_{1}$, and \textsf{VWCG}$_{1}$, respectively. 
Obviously, \textsf{CG}$_{\mathfrak c}$ is true. 
The weakest forms of club guessing principles which are not provable in \textsf{ZFC} 
are when the index is $< \mathfrak c$.

\begin{proposition}
	For any cardinal $\kappa \ge 2$, 
	\textsf{Measuring}$_{< \kappa}$ implies the failure of \textsf{VWCG}$_{< \kappa}$.
\end{proposition}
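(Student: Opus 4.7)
The plan is to argue by contradiction: assume $\vec{\mathcal L} = \langle \mathcal L_\alpha : \alpha \in \omega_1 \cap \Lim \rangle$ witnesses $\textsf{VWCG}_{<\kappa}$, so $|\mathcal L_\alpha| < \kappa$ and each $L \in \mathcal L_\alpha$ is a cofinal subset of $\alpha$ of order type $\omega$. The first observation is that such an $L$ is automatically closed in $\alpha$: a set of order type $\omega$ cofinal in $\alpha$ has its only possible limit point at $\alpha$ itself, which lies outside $\alpha$. Hence $\vec{\mathcal L}$ is a legitimate input to $\textsf{Measuring}_{<\kappa}$, and produces a club $D \subseteq \omega_1$ that measures $\vec{\mathcal L}$. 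The goal is then to extract from $D$ a club $D'$ witnessing the failure of the very weak club-guessing property of $\vec{\mathcal L}$.

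Set $D' := \lim(\lim(D))$, which is a club in $\omega_1$. The key estimate is that for every $\alpha \in \lim(D')$ and every $\beta < \alpha$, the set $(D \cap \alpha) \setminus \beta$ has order type strictly greater than $\omega$. Indeed, since $\alpha \in D'$ is a limit of elements of $\lim(D)$, one can pick $\beta < \gamma_0 < \gamma_1 < \alpha$ with $\gamma_0, \gamma_1 \in \lim(D)$; then $D \cap (\beta,\gamma_0)$ is cofinal in $\gamma_0$ and $D \cap [\gamma_0,\gamma_1)$ is cofinal in $\gamma_1$, each of order type at least $\omega$, so their union, an initial segment of $(D \cap \alpha) \setminus \beta$, already has order type at least $\omega + \omega$. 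Now fix any $L \in \mathcal L_\alpha$. Since $D$ measures $\vec{\mathcal L}$ and $\alpha \in \lim(D') \subseteq \lim(D)$, there is $\beta < \alpha$ with $(D \cap \alpha) \setminus \beta$ either contained in $L$ or disjoint from $L$. The containment alternative is ruled out because $L$ has order type $\omega$, so any subset of $L$ has order type at most $\omega$. Therefore $(D \cap \alpha) \setminus \beta$ is disjoint from $L$, whence $L \cap D \subseteq L \cap \beta$ is finite, since $L$ has order type $\omega$ cofinal in $\alpha$ and $\beta < \alpha$.

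To finish, I verify that $L \cap D'$ is finite for every limit $\alpha < \omega_1$ and every $L \in \mathcal L_\alpha$. If $\alpha \in \lim(D')$, then by the previous paragraph $L \cap D' \subseteq L \cap D$ is finite. If $\alpha \notin \lim(D')$, then $D' \cap \alpha$ is bounded below $\alpha$ by some $\gamma$, so $L \cap D' \subseteq L \cap \gamma$ is finite. This contradicts $\vec{\mathcal L}$ being a $\textsf{VWCG}_{<\kappa}$-witness. The main obstacle is the ``contained in'' branch of the measuring dichotomy: raw measuring of $\vec{\mathcal L}$ does not on its own force $L \cap D$ to be finite, because $D \cap \alpha$ could be eventually contained in the ladder $L$. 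Passing to the double-limit club $\lim(\lim(D))$ is precisely the device that thickens the trace $D \cap \alpha$ past order type $\omega$, eliminating the containment branch and leaving only the disjoint one.
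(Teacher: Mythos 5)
Your proof is correct and follows essentially the same strategy as the paper's: both exploit the fact that a suitably thinned tail of $D \cap \alpha$ has order type greater than $\omega$ to rule out the containment branch of the measuring dichotomy, leaving only the alternative that forces $L \cap D$ to be finite. The only (cosmetic) difference is the thinning device: the paper passes to the club of indecomposable $\alpha \in \lim(D)$ with $\ot(D \cap \alpha) = \alpha$ and applies very weak guessing to that club, whereas you pass to $\lim(\lim(D))$.
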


\begin{proof}
	Suppose for a contradiction that 
	\textsf{Measuring}$_{< \kappa}$ and \textsf{VWCG}$_{<\kappa}$ both hold. 
	Fix a very weak club guessing sequence 
	$\vec{\mathcal L} = \langle \mathcal{L}_\alpha : \alpha \in \omega_1 \cap \Lim \rangle$ 
	such that each $\mathcal L_\alpha$ has cardinality less than $\kappa$. 
	Observe that for each $\alpha$, every member of $\mathcal{L}_\alpha$ is vacuously a closed 
	subset of $\alpha$ since it has 
	order type $\omega$.

	By \textsf{Measuring}$_{< \kappa}$, there exists 
	a club $D \subseteq \omega_1$ which measures $\vec{\mathcal L}$. 
	Let $E$ be the club set of indecomposable 
	limit ordinals $\alpha>\omega$ in $\lim(D)$ 
	such that $\ot(D \cap \alpha) = \alpha$. 
	Since $\vec{\mathcal L}$ is a very weak club guessing sequence, there exists 
	a limit ordinal $\alpha$ and $L \in \mathcal L_\alpha$ such that $L \cap E$ is infinite. 
	In particular, $\alpha$ is a limit point of $E$, and hence of $D$. 

	Since $D$ measures $\vec{\mathcal L}$ and $L \in \mathcal L_\alpha$, 
	$D \cap \alpha$ measures $L$. 
	So we can fix $\beta < \alpha$ such that 
	$(D \cap \alpha) \setminus \beta$ is either a subset of, 
	or disjoint from, $L$. 
	Now $L \cap E$, and hence $L \cap D$, is infinite. 
	As $L$ has order type $\omega$, this implies that 
	$L \cap D$ is cofinal in $\alpha$. 
	By the choice of $\beta$, 
	$(D \cap \alpha) \setminus \beta$ must be a subset of $L$. 
	But since $\alpha \in E$, 
	$\ot(D \cap \alpha) = \alpha$ and $\alpha$ is 
	indecomposable, which implies that 
	$\ot((D \cap \alpha) \setminus \beta) = \alpha$. 
	As $\alpha > \omega$, this is impossible since 
	$(D \cap \alpha) \setminus \beta$ is a subset of $L$ 
	and $L$ has order type $\omega$.
\end{proof}

In particular, since Strong Measuring is consistent, so is the failure of \textsf{VWCG}$_{< \mathfrak c}$. 
(The consistency of $\neg \textsf{VWCG}_{< \mathfrak c}$ together with $\mathfrak c$ 
arbitrarily large was previously shown in \cite{aspero2}.)

\begin{proposition}[Hru\v{s}\'{a}k \cite{aspero}]
	\textsf{VWCG}$_{\mathfrak b}$ is true.
\end{proposition}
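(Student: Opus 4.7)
The plan is to construct, from an unbounded family of size $\mathfrak{b}$ in $\omega^\omega$, a very weak club guessing sequence $\vec{\mathcal L} = \langle \mathcal L_\alpha : \alpha \in \omega_1 \cap \Lim\rangle$ with $|\mathcal L_\alpha| \le \mathfrak{b}$ for every $\alpha$. Fix an unbounded family $\{f_\xi : \xi < \mathfrak{b}\}$ of strictly increasing functions in $\omega^\omega$, and for each limit $\alpha < \omega_1$ fix a strictly increasing cofinal sequence $e_\alpha \colon \omega \to \alpha$. For each such $\alpha$ and each $\xi < \mathfrak{b}$, I will define a cofinal $\omega$-subset $L_{\alpha, \xi} \subseteq \alpha$ built canonically from $e_\alpha$ and $f_\xi$, and set $\mathcal L_\alpha := \{L_{\alpha, \xi} : \xi < \mathfrak{b}\}$, so that $|\mathcal L_\alpha| \le \mathfrak{b}$.

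To verify that $\vec{\mathcal L}$ is very weak club guessing, let $D \subseteq \omega_1$ be a club, and pick any $\alpha \in \Lim(D)$ (so $D \cap \alpha$ is cofinal in $\alpha$ and $\cf(\alpha) = \omega$). Associate to the pair $(D, \alpha)$ a function $g_\alpha^D \in \omega^\omega$ encoding how $D$-points are distributed relative to the intervals determined by $e_\alpha$ near $\alpha$; a natural choice is
\[
g_\alpha^D(n) := \min\{k > n : D \cap (e_\alpha(n), e_\alpha(k)] \ne \emptyset\},
\]
well-defined since $D \cap \alpha$ is cofinal in $\alpha$. By unboundedness of $\{f_\xi\}$, there is $\xi < \mathfrak{b}$ with $f_\xi(n) > g_\alpha^D(n)$ for infinitely many $n$; for each such $n$, the interval $(e_\alpha(n), e_\alpha(f_\xi(n))]$ contains a point of $D$. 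The construction of $L_{\alpha, \xi}$ is then designed to guarantee that at least one point of $D$ in each such interval is captured by $L_{\alpha, \xi}$, forcing $L_{\alpha, \xi} \cap D$ to be infinite, as required for very weak club guessing.

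The main obstacle is precisely this last step: the naive definition $L_{\alpha, \xi} := \{e_\alpha(f_\xi(n)) : n < \omega\}$ is insufficient, since the right endpoint $e_\alpha(f_\xi(n))$ need not itself lie in $D$. To remedy this, one fixes for each $\alpha$ a bijection $\pi_\alpha \colon \omega \to \alpha$ and arranges that, in each block $(e_\alpha(n), e_\alpha(f_\xi(n))]$, $L_{\alpha, \xi}$ includes a canonical selection of ordinals enumerated via $\pi_\alpha$; a diagonalization across $n$ keeps the total order type $\omega$ while remaining cofinal in $\alpha$. The unboundedness of $\{f_\xi\}$ combined with this richer selection scheme then forces infinitely many of the picked ordinals to lie in $D$, yielding the desired infinite intersection and completing the verification.
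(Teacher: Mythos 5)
You have correctly identified where the difficulty lies, but the remedy you sketch does not close it, and the function you choose to dominate is the wrong one. Your $g_\alpha^D(n) = \min\{k > n : D \cap (e_\alpha(n), e_\alpha(k)] \ne \emptyset\}$ only records \emph{which} interval contains a point of $D$; it says nothing about \emph{which} point of that interval lies in $D$. Since $L_{\alpha,\xi}$ must have order type $\omega$, it can contain only finitely many ordinals from each block $(e_\alpha(n), e_\alpha(f_\xi(n))]$, while the block itself may be infinite and $D$ may meet it in a single point. No ``canonical selection enumerated via $\pi_\alpha$'' that is finite on each block and defined without reference to $D$ can be guaranteed to hit that point, as long as the only information extracted from $D$ is the interval index $k$. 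The phrase ``the unboundedness of $\{f_\xi\}$ combined with this richer selection scheme then forces infinitely many of the picked ordinals to lie in $D$'' is precisely the assertion that needs proof, and with your $g_\alpha^D$ it is false in general.

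The fix --- which is the actual content of Hru\v{s}\'ak's argument --- is to make the dominated function encode the $\pi_\alpha$-\emph{index of an actual point of $D$}. Given the bijection $\pi_\alpha : \omega \to \alpha$ and the ladder $e_\alpha$, set $g_\alpha^D(n) := \min\{m < \omega : \pi_\alpha(m) \in D \setminus e_\alpha(n)\}$ (well defined since $D \cap \alpha$ is cofinal in $\alpha$), and define $L_{\alpha,\xi} := \{e_\alpha(n) : n < \omega\} \cup \bigcup_n \bigl(\pi_\alpha[f_\xi(n)] \setminus e_\alpha(n)\bigr)$, i.e.\ in the $n$-th stage you throw in \emph{all} ordinals of $\pi_\alpha$-index below $f_\xi(n)$ that lie at or above $e_\alpha(n)$. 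Each such piece is finite, and since only finitely many stages contribute below any fixed bound $\beta < \alpha$, the set $L_{\alpha,\xi}$ has order type $\omega$ and is cofinal in $\alpha$. Now if $f_\xi(n) > g_\alpha^D(n)$ for infinitely many $n$, then for each such $n$ the specific ordinal $\pi_\alpha(g_\alpha^D(n)) \in D$ is captured in $L_{\alpha,\xi}$, and these points are unbounded in $\alpha$, so $L_{\alpha,\xi} \cap D$ is infinite. This is exactly the construction in the text (with $h_\delta$, $C_\delta$, $r_\alpha$ in place of your $\pi_\alpha$, $e_\alpha$, $f_\xi$); your write-up stops just short of the one idea that makes it work.
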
	
	 
\begin{proof}
	 	Fix an unbounded family $\{ r_\alpha : \alpha < \mathfrak b \}$ in $\omega^\omega$. 
	 	For each limit ordinal $\delta < \omega_1$, fix a cofinal subset $C_\delta$ of $\delta$ 
	 	with order type $\omega$ and a bijection $h_\delta : \omega \to \delta$. 
	 	Let $C_\delta(n)$ denote the $n$-th member of 
	 	$C_\delta$ for all $n < \omega$. 
	 	For all limit ordinals $\delta < \omega_1$ and 
	 	$\alpha < \mathfrak b$, 
	 	define 
	 	$$
	 	A_\delta^\alpha := C_\delta \cup 
	 	\bigcup \{ h_\delta[r_\alpha(n)] \setminus 
	 	C_\delta(n) : n < \omega \}.
	 	$$
	 	
	 	It is easy to check that for all 
	 	$\delta$ and $\alpha$, 
	 	$A_\delta^\alpha$ has order type $\omega$ and $\sup(A_\delta^\alpha) = \delta$. 
	 	Given a club $C \subseteq \omega_1$, 
	 	let $\delta$ be a limit point of $C$ 
	 	and let $g_{C, \delta} : \omega \to \omega$ be the function given by 
	 	$$
	 	g_{C, \delta}(n) = 
	 	\min \{ m < \omega : 
	 	h_\delta(m) \in C \setminus C_\delta(n) \}.
	 	$$
	 	Now let $\alpha < \mathfrak b$ be such that $r_\alpha(n) > g_{C, \delta}(n)$ 
	 	for infinitely many $n$. 
	 	It then follows that 
	 	$|A_\delta^\alpha \cap C| = \omega$.	
	 	\end{proof}

By Propositions 2.8 and 2.9, the following is immediate.

\begin{corollary}
	\textsf{Measuring}$_{\mathfrak b}$ is false.
\end{corollary}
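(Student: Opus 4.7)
The plan is to combine the previous two propositions in the obvious way. By definition, $\Measuring_{\mathfrak b}$ is the same as $\Measuring_{<\mathfrak b^+}$, and similarly $\textsf{VWCG}_{\mathfrak b}$ is $\textsf{VWCG}_{<\mathfrak b^+}$. Proposition 2.9 asserts that $\textsf{VWCG}_{\mathfrak b}$ holds in \textsf{ZFC}, so $\textsf{VWCG}_{<\mathfrak b^+}$ is true.

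On the other hand, applying Proposition 2.8 with the cardinal parameter $\kappa = \mathfrak b^+$ (which is at least $2$ since $\mathfrak b$ is uncountable), we get that $\Measuring_{<\mathfrak b^+}$ implies the failure of $\textsf{VWCG}_{<\mathfrak b^+}$. Since the latter is true, $\Measuring_{<\mathfrak b^+}$, i.e.\ $\Measuring_{\mathfrak b}$, must fail.

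There is no real obstacle; the content lies entirely in Propositions 2.8 and 2.9, and the only thing to be careful about is correctly matching the subscript conventions ($\Measuring_\lambda = \Measuring_{<\lambda^+}$ and likewise for \textsf{VWCG}) so that the $\kappa$ supplied to Proposition 2.8 indeed lines up with the $\mathfrak b^+$ coming out of Proposition 2.9.
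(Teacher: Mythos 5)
Your argument is correct and is exactly the paper's intended proof: the paper derives this corollary as immediate from Propositions 2.8 and 2.9, which is precisely the combination you carry out. Your care with the subscript convention $\Measuring_{\mathfrak b}=\Measuring_{<\mathfrak b^+}$ is the only point of substance, and you handle it correctly.
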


An obvious question is whether the parametrized versions of club guessing are actually the same as 
the usual ones. 
We conclude this section by showing that they are not.

Recall that a forcing poset $\p$ is \emph{$\omega^\omega$-bounding} if every function in 
$\omega^\omega \cap V^\p$ is dominated by a function in $\omega^\omega \cap V$.
 
\begin{lemma}[Hru\v{s}\'{a}k]
	Assume that \textsf{VWCG} fails. 
	Let $\p$ be any $\omega_1$-c.c., $\omega^\omega$-bounding forcing. 
	Then $\p$ forces that \textsf{VWCG} fails.
\end{lemma}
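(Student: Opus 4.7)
Suppose, toward a contradiction, that $\p$ forces \textsf{VWCG}, with witnessing name $\dot{\vec L} = \langle \dot L_\alpha : \alpha \in \omega_1 \cap \Lim \rangle$. My plan is to produce, in $V$, a ladder system $\vec L^*$ which is itself a very weak club guessing sequence, contradicting the failure of \textsf{VWCG} in $V$.

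For each limit $\alpha < \omega_1$ fix a ladder $C_\alpha = \{c_\alpha(m) : m < \omega\}$ in $V$. For each $m$, the $\p$-name $\dot L_\alpha \cap [c_\alpha(m), c_\alpha(m+1))$ denotes a finite subset of $[c_\alpha(m), c_\alpha(m+1))$, and by $\omega_1$-c.c.\ applied to a maximal antichain deciding this name, its collection of possible values is countable in $V$; enumerate them in $V$ as $\{S_{\alpha, m, k} : k < \omega\}$. Let $\dot j_\alpha(m) \in \omega$ be the $\p$-name for the index with $\dot L_\alpha \cap [c_\alpha(m), c_\alpha(m+1)) = S_{\alpha, m, \dot j_\alpha(m)}$. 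By $\omega^\omega$-bounding, pick $j^*_\alpha \in V \cap \omega^\omega$ with $\p \Vdash \dot j_\alpha \leq^* j^*_\alpha$, and set
\[
F_\alpha(m) := \bigcup_{k \leq j^*_\alpha(m)} S_{\alpha, m, k},
\]
a finite subset of $[c_\alpha(m), c_\alpha(m+1))$ in $V$. Define $L^*_\alpha := \bigcup_m F_\alpha(m)$.

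I claim $L^*_\alpha$ is a ladder at $\alpha$ in $V$ with $\p \Vdash \dot L_\alpha \subseteq^* L^*_\alpha$. For the ladder property, the $F_\alpha(m)$ are finite sets in pairwise disjoint increasing intervals of $\alpha$, so $L^*_\alpha$ has order type $\omega$; and since $\dot L_\alpha$ is forced cofinal in $\alpha$, cofinally many $m$ admit a nonempty value in the enumeration, so $F_\alpha(m)$ is nonempty for cofinally many $m$, making $L^*_\alpha$ cofinal in $\alpha$. For the covering property, fix a generic $G$: the eventual domination fails on only a finite initial set of $m$'s; for all other $m$, $\dot L_\alpha[G] \cap [c_\alpha(m), c_\alpha(m+1)) \subseteq F_\alpha(m) \subseteq L^*_\alpha$, so $\dot L_\alpha[G] \setminus L^*_\alpha$ is contained in finitely many intervals each contributing finitely many elements, hence is finite.

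Now apply $\neg \textsf{VWCG}$ in $V$ to the ladder system $\vec L^* = \langle L^*_\alpha \rangle$ to obtain a club $D \in V$ with $L^*_\alpha \cap D$ finite for every limit $\alpha$. In $V^\p$, for each $\alpha$,
\[
\dot L_\alpha \cap D \;\subseteq\; (L^*_\alpha \cap D) \cup (\dot L_\alpha \setminus L^*_\alpha),
\]
a union of two finite sets, hence finite. So $D$ witnesses $\neg \textsf{VWCG}$ for $\dot{\vec L}$, contradicting the assumption. The main obstacle is the interval-by-interval construction of $L^*_\alpha$: combining the countability of possibilities from $\omega_1$-c.c.\ with eventual domination from $\omega^\omega$-bounding to collapse the countably many potential ``pieces'' of $\dot L_\alpha$ per $C_\alpha$-interval into a single finite $V$-cover, all while maintaining the disjoint-increasing-interval structure that forces $L^*_\alpha$ to be cofinal of order type $\omega$.
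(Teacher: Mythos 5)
Your argument is correct and is essentially the paper's proof: chop $\alpha$ into intervals along a ground-model ladder, use $\omega_1$-c.c.\ together with $\omega^\omega$-bounding to produce a ground-model set $L^*_\alpha$ of order type $\omega$ with $\dot L_\alpha\subseteq^* L^*_\alpha$, and then transfer the failure of \textsf{VWCG} from $V$ to $V^{\p}$. The only slip is the inference that $F_\alpha(m)\neq\emptyset$ for cofinally many $m$ (a nonempty possible value need not occur at an index $\le j^*_\alpha(m)$), but cofinality of $L^*_\alpha$ follows anyway from your covering claim, since $\dot L_\alpha\setminus L^*_\alpha$ being finite with $\dot L_\alpha$ cofinal in $\alpha$ gives $\sup L^*_\alpha=\alpha$, and this is absolute.
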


\begin{proof}
	Since $\p$ is $\omega_1$-c.c.\ and $\omega^\omega$-bounding, a standard argument shows that 
	whenever $p \in \p$ and $p$ forces that $\dot b \in \omega^\omega$, then there exists a function 
	$b^* \in \omega^\omega$ such that $p$ forces that $b^*$ dominates $\dot b$.
	
	Let us show that whenever $p \in \p$, 
	$\delta < \omega_1$, and $p$ forces that $\dot X$ is a cofinal subset of $\delta$ of order type $\omega$, 
	then there exists a set $Y$ with order type $\omega$ such that $p$ forces that $\dot X \subseteq Y$. 
	To see this, fix a bijection $f : \omega \to \delta$ and a strictly increasing sequence 
	$\langle \alpha_n : n < \omega \rangle$ cofinal in $\alpha$ with $\alpha_0 = 0$. 
	We claim that there exists a $\p$-name $\dot b$ for a function from $\omega$ to $\omega$ 
	such that $p$ forces that for all $n < \omega$, 
	$\dot b(n)$ is the least $m < \omega$ such that 
	$\dot X \cap [\alpha_n,\alpha_{n+1}) \subseteq f[m]$. 
	This is true since $p$ forces that $\dot X$ has order type $\omega$ and hence that  
	$\dot X \cap [\alpha_n,\alpha_{n+1})$ is finite for all $n < \omega$. 
	Fix a function $b^* : \omega \to \omega$ such that $p$ forces that $b^*$ dominates $\dot b$. 
	Now let 
	$$
	Y := \bigcup \{ f[b^*(n)] \cap [\alpha_n,\alpha_{n+1}) : n < \omega \}.
	$$
	It is easy to check that $Y$ has order type $\omega$ and $p$ forces that $\dot X \subseteq Y$.
	
	Now we are ready to prove the proposition. 
	So suppose that $p \in \p$ forces that 
	$\langle \dot X_\alpha : \alpha \in \omega_1 \cap \Lim \rangle$ is a very weak club guessing sequence. 
	By the previous paragraph, for each limit ordinal $\alpha < \omega_1$ we can fix a cofinal subset $Y_\alpha$ 
	of $\alpha$ with order type $\omega$ 
	such that $p$ forces that $\dot X_\alpha \subseteq Y_\alpha$. 
	We claim that $\langle Y_\alpha : \alpha \in \omega_1 \cap \Lim \rangle$ is a very weak club 
	guessing sequence in the ground model, which completes the proof. 
	So consider a club $C \subseteq \omega_1$. 
	Then $C$ is still a club in $V^\p$. 
	Fix $q \le p$ and a limit ordinal $\alpha < \omega_1$ such that $q$ forces that $\dot X_\alpha \cap C$ is infinite. 
	Then clearly $q$ forces that $Y_\alpha \cap C$ is infinite, so in fact, $Y_\alpha \cap C$ is infinite.
\end{proof}

\begin{proposition}
	It is consistent that $\neg \textsf{VWCG}$ and $\textsf{CG}_{\omega_1}$ both hold.
	\end{proposition}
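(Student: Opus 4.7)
The plan is to produce a model of $\textsf{CH} + \neg\textsf{VWCG}$; in any such model $\textsf{CG}_{\omega_1}$ holds trivially, and the proposition follows. Indeed, assuming $\textsf{CH}$, for each limit $\alpha < \omega_1$ let $\mathcal{L}_\alpha$ be the collection of all cofinal $\omega$-sequences in $\alpha$; then $|\mathcal{L}_\alpha| \leq 2^{\aleph_0} = \aleph_1$, and given any club $D \subseteq \omega_1$ and any limit point $\alpha$ of $D$, an arbitrary cofinal $\omega$-sequence extracted from $D \cap \alpha$ belongs to $\mathcal{L}_\alpha$ and is a subset of $D$.

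To force $\textsf{CH} + \neg\textsf{VWCG}$, I would begin in a ground model $V_0 \models \diamondsuit$ and perform a length-$\omega_1$ countable support iteration $\langle \p_\alpha, \dot{\q}_\beta : \alpha \leq \omega_1,\, \beta < \omega_1 \rangle$, using $\diamondsuit$-bookkeeping to enumerate potential ladder systems. At stage $\beta$, $\dot{\q}_\beta$ is a proper, no-new-reals poset that adds a club $D$ diagonalizing the $\diamondsuit$-guessed ladder system $\dot{\vec L}^\beta = \langle \dot L^\beta_\alpha : \alpha \in \omega_1 \cap \Lim \rangle$, in the sense that $\dot L^\beta_\alpha \cap D$ is finite for every limit $\alpha$. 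Concretely, $\dot{\q}_\beta$ can be taken as the poset of countable closed bounded subsets $p$ of $\omega_1$ satisfying that $\dot L^\beta_\alpha \cap p$ is finite for every limit $\alpha \leq \max p$, ordered by end-extension. By standard Shelah-style preservation theorems for countable support iterations of proper no-new-reals forcings, the entire iteration adds no reals, preserving $\textsf{CH}$ in the final extension; and $\diamondsuit$-bookkeeping guarantees that every ladder system of the final model is caught at some stage, giving $\neg\textsf{VWCG}$.

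The main obstacle is verifying the properness and no-new-reals property of each single-step forcing $\dot{\q}_\beta$, and the preservation of these properties through the countable support iteration. The existence of a diagonalizing club at stage $\beta$ presupposes that $\dot{\vec L}^\beta$ fails to be a very weak club guessing sequence in $V^{\p_\beta}$; this is maintained inductively, since the accumulation of diagonalizing clubs from earlier stages systematically destroys potential VWCG witnesses, so the $\diamondsuit$-guessed ladder system at stage $\beta$ is diagonalizable. The technical preservation of no-new-reals through countable support iteration is the main non-routine ingredient, and requires verifying that $\dot{\q}_\beta$ satisfies the appropriate Shelahian preservation condition (e.g., $\alpha$-properness for all countable $\alpha$, or an analogous completeness condition).
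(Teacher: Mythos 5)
Your opening reduction is sound: under \textsf{CH} the principle $\textsf{CG}_{\omega_1}$ coincides with $\textsf{CG}_{\mathfrak c}$, which is provable in \textsf{ZFC} by taking $\mathcal L_\alpha$ to be \emph{all} cofinal $\omega$-subsets of $\alpha$, so the proposition follows from the consistency of $\textsf{CH} + \neg\textsf{VWCG}$. That consistency is exactly Shelah's NNR theorem, which the paper cites as a black box. Had you simply invoked it, your argument would be complete (though it witnesses the proposition only in a degenerate way: under \textsf{CH}, $\textsf{CG}_{\omega_1}$ is a \textsf{ZFC} triviality, whereas the paper's model has $\mathfrak c=\omega_2$, so that $\textsf{CG}_{\omega_1}=\textsf{CG}_{<\mathfrak c}$ holds non-vacuously there; the paper gets this by forcing over Shelah's model with an $\omega_1$-c.c., $\omega^\omega$-bounding poset adding $\omega_2$ reals, using Lemma 2.11 to preserve $\neg\textsf{VWCG}$ and the ground-model \textsf{CH} sequence to witness $\textsf{CG}_{\omega_1}$).

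The genuine gap is that you do not invoke Shelah but instead sketch a construction of the $\textsf{CH}+\neg\textsf{VWCG}$ model, and that sketch does not work as written. First, a countable support iteration of length $\omega_1$ cannot catch all ladder systems of the final model: a ladder system is a size-$\omega_1$ object, the final model has $2^{\omega_1}$ of them, new ones appear that lie in no intermediate extension (e.g.\ ones coded from the full generic), and $\diamondsuit$ on $\omega_1$ only guesses initial segments $A\cap\beta$, which does not let you handle an entire ladder system at a countable stage. The standard shape of such a construction is length $\omega_2$ with bookkeeping of all $\p_\beta$-names for ladder systems. Second, and more seriously, the step you defer as ``the main obstacle'' --- properness and the no-new-reals property of the single-step diagonalization poset, and their preservation under countable support iteration --- is not a routine verification by ``standard Shelah-style preservation theorems''; it is the entire content of Shelah's NNR machinery. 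The naive end-extension poset you describe is not obviously proper (to reach $\delta=M\cap\omega_1$ generically you must meet dense sets of $M$ while keeping $L_\delta\cap q$ finite, and density gives no control on where the witnessing extensions land relative to $L_\delta$), and countable support iterations of totally proper forcings can add reals in general, which is precisely why Shelah needs $<\omega_1$-properness together with completeness with respect to a $2$-completeness system. Your closing paragraph about ``accumulated diagonalizing clubs making the guessed ladder system diagonalizable'' also misidentifies the difficulty: one can always densely extend conditions for any ladder system; the issue is properness and not adding reals, not the existence of a diagonalizing club. Replace the construction with a citation of Shelah's model and the proof goes through.
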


\begin{proof}
Let $V$ be a model in which \textsf{CH} holds and \textsf{VWCG} fails. 
Such a model was shown to exist by Shelah \cite{NNR}. 
Let $\p$ be an $\omega_1$-c.c., $\omega^\omega$-bounding forcing poset 
which adds at least $\omega_2$ many reals; for example, random real forcing with product measure 
is such a forcing. 
We claim that in $V^\p$, \textsf{CG}$_{\omega_1}$ holds but \textsf{VWCG} fails. 
By Lemma 2.11, \textsf{VWCG} is false in $V^\p$. 
In $V$, define $\vec{\mathcal L} = \langle \mathcal L_\alpha : \alpha \in \omega_1 \cap \Lim \rangle$ 
by letting $\mathcal L_\alpha$ be the collection of all cofinal subsets of $\alpha$ with order type $\omega$. 
Since \textsf{CH} holds, the cardinality of each $\mathcal L_\alpha$ is $\omega_1$. 
If $C$ is a club subset of $\omega_1$ in $V^\p$, then since $\p$ is $\omega_1$-c.c., 
there is a club $D \subseteq \omega_1$ in $V$ such that $D \subseteq C$. 
In $V$, fix $d \subseteq D$ with order type $\omega$, and let $\alpha := \sup(d)$. 
Then $d \in \mathcal L_\alpha$ and $d \subseteq C$. 
Thus, $\vec{\mathcal L}$ witnesses that \textsf{CG}$_{\omega_1}$ holds in $V^\p$.
\end{proof}

\section{The Consistency of Strong Measuring and $\neg \textsf{CH}$}

As we previously mentioned, Measuring is equivalent to \textsf{Measuring}$_\omega$, and therefore under 
\textsf{CH}, Measuring is equivalent to Strong Measuring. 
In this section we establish the consistency of Strong Measuring with the negation of \textsf{CH}. 
More precisely, we will prove that \textsf{MRP} together with \textsf{MA}($\sigma$-centered) implies Strong Measuring, 
and \textsf{BPFA} implies Strong Measuring. 
Recall that both \textsf{MRP} and \textsf{BPFA} imply that $\mathfrak c = \omega_2$ (\cite{moore}).

A set $M$ is \emph{suitable} if for some regular cardinal 
$\theta > \omega_1$, $M$ is a countable elementary substructure of $H(\theta)$. 
We will follow the conventions introduced in Section 1 that the 
properties ``open'' and ``$M$-stationary'' refer to open and $M$-stationary 
subsets of $\omega_1$ (where $\omega_1$ is considered as a subspace of $[\omega_1]^\omega$).

\begin{proposition}
	Assume that $M$ is suitable. 
	Let $\delta := M \cap \omega_1$. 
	Suppose that $\mathcal Y$ is a collection of open subsets of $\delta$ such that for any 
	finite set $a \subseteq \mathcal Y$, $\bigcap a$ is $M$-stationary. 
	Then there exists a $\sigma$-centered forcing $\p$ and a collection 
	$\mathcal D$ of dense subsets of $\p$ of size at most $|\mathcal Y| + \omega$ such that whenever $G$ is a filter on 
	$\p$ in some outer model $W$ of $V$ with $\omega_1^V = \omega_1^W$ 
	which meets each member of $\mathcal D$, 
	then there exists a set $z \subseteq \delta$ in $W$ which is open, $M$-stationary, and 
	satisfies that for all $X \in \mathcal Y$, $z \setminus X$ is bounded in $\delta$.
\end{proposition}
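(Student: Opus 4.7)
The plan is to use a finite--approximation forcing with commitment side conditions. Define $\p$ to consist of pairs $p = (u_p, F_p)$, where $u_p$ is a finite union $\bigcup_{i < n}(\alpha_i, \beta_i)$ of open intervals with $\alpha_i < \beta_i < \delta$ (the empty union is allowed), and $F_p$ is a finite subset of $\mathcal{Y}$. Order $\p$ by $q \leq p$ iff $u_p \subseteq u_q$, $F_p \subseteq F_q$, and $u_q \setminus u_p \subseteq \bigcap F_p$. The intuition is that $u_p$ is a finite approximation to the desired $z$, while $F_p$ records those $X \in \mathcal{Y}$ to which we have committed containing a tail of $z$; the third clause of the order enforces that once we commit to $X$ at stage $p$, no subsequent extension adds material outside $X$. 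Transitivity of $\leq$ is routine, using $\bigcap F_q \subseteq \bigcap F_p$ when $F_p \subseteq F_q$.

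For $\sigma$-centeredness, observe that since $\delta$ is a countable ordinal there are only countably many possible values of $u_p$. For any fixed $u$, the conditions $\{p \in \p : u_p = u\}$ are directed: $(u, F_{p_1} \cup \cdots \cup F_{p_n})$ is a common lower bound of any finite collection. The family $\mathcal{D}$ consists of $D_X := \{p \in \p : X \in F_p\}$ for each $X \in \mathcal{Y}$, together with $D_C := \{p \in \p : u_p \cap C \neq \emptyset\}$ for each club $C \subseteq \omega_1$ with $C \in M$. Since $M$ is countable, there are countably many such $C$, so $|\mathcal{D}| \leq |\mathcal{Y}| + \omega$, as required.

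Density of $D_X$ is immediate by adjoining $X$ to $F_p$. The heart of the argument is density of $D_C$: given $p = (u, F)$, the hypothesis that $\bigcap F$ is $M$-stationary provides some $\gamma \in C \cap \bigcap F$, and since $\bigcap F$ is open in $\delta$ we can choose a basic open interval $I$ with $\gamma \in I \subseteq \bigcap F$ and $\sup(I) < \delta$; then $(u \cup I, F) \leq p$ lies in $D_C$. This step is where both structural hypotheses on $\mathcal{Y}$ are used---$M$-stationarity to locate a point of $C$ inside $\bigcap F$, and openness to surround that point with an actual open neighborhood---and I expect it to be the only place in the proof where any real work is needed.

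Given a filter $G \subseteq \p$ in $W$ meeting $\mathcal{D}$, define $z := \bigcup \{u_p : p \in G\}$. Then $z \subseteq \delta$ is open as a union of open sets, and $M$-stationary because every club $C \in M$ (still a club in $W$ since $\omega_1^V = \omega_1^W$) is met by some $u_p$ with $p \in G \cap D_C$. For each $X \in \mathcal{Y}$, choose $p \in G \cap D_X$; for any $q \in G$, directedness of $G$ yields $r \in G$ with $r \leq p$ and $r \leq q$, whence $u_q \subseteq u_r \subseteq u_p \cup (u_r \setminus u_p) \subseteq u_p \cup \bigcap F_p \subseteq u_p \cup X$. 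Thus $z \setminus X \subseteq u_p$, which is bounded in $\delta$ by the very definition of $\p$, completing the verification.
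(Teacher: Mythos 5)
Your proof is correct and follows essentially the same route as the paper's: finite commitments to members of $\mathcal Y$ as side conditions, density of the $D_X$ and $D_C$ sets via $M$-stationarity and openness of finite intersections, and the same tail argument showing $z\setminus X$ is contained in a bounded first coordinate. The only (harmless) difference is that you get $\sigma$-centeredness by restricting first coordinates to finite unions of intervals below $\delta$ and ordering by inclusion, whereas the paper takes arbitrary open bounded subsets of $\delta$ lying in $M$ and orders by end-extension.
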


\begin{proof}
	Define a forcing poset $\p$ to consist of conditions which are pairs 
	$(x,a)$, where $x$ is an open and bounded subset of $\delta$ in $M$ 
	and $a$ is a finite subset of $\mathcal Y$. 
	Let $(y,b) \le (x,a)$ if $y$ is an end-extension of $x$, $a \subseteq b$, and 
	$y \setminus x \subseteq \bigcap a$.
	
	Since $M$ is countable, there are only countably many possibilities for the first 
	component of a condition. 
	If $(x,a_0), \ldots, (x,a_n)$ are finitely many conditions with the same first component, 
	then easily $(x,a_0 \cup \ldots \cup a_n)$ is a condition in $\p$ which is below 
	each of the conditions $(x,a_0), \ldots, (x,a_n)$. 
	It follows that $\p$ is $\sigma$-centered.

	For each $X \in \mathcal Y$, let $D_X$ denote the set of conditions $(x,a)$ such that $X \in a$. 
	Observe that $D_X$ is dense. 
	For every club $C$ of $\omega_1$ which is a member of $M$, let $E_C$ denote the 
	set of conditions $(x,a)$ such that $x \cap C$ is non--empty. 
	We claim that $E_C$ is dense. 
	Let $(x,a)$ be a condition. 
	Since $\bigcap a$ is $M$-stationary and $\lim(C) \setminus (\sup(x)+1)$ is a club subset of 
	$\omega_1$ in $M$, we can find a limit ordinal $\alpha$ in $C \cap (\bigcap a)$ 
	which is in the interval $(\sup(x),\delta)$. 
	Since $\alpha \in \bigcap a$ and $\bigcap a$ is open, we can find $\beta < \gamma < \delta$ 
	such that $\alpha \in (\beta,\gamma) \subseteq \bigcap a$. 
	As $\sup(x) + 1 < \alpha$, without loss of generality $\sup(x) < \beta$. 
	By elementarity, the interval $b := (\beta,\gamma)$ is in $M$. 
	It follows that $(x \cup b,a)$ is a condition, $x \cup b$ end-extends $x$, 
	and $(x \cup b) \setminus x = b \subseteq \bigcap a$. 
	Thus, $(x \cup b,a) \le (x,a)$, and since $\alpha \in C$, $(x \cup b,a) \in E_C$.

	Let $\mathcal D$ denote the collection of all dense sets of the form $D_X$ where $X \in \mathcal Y$, 
	or $E_C$ where $C$ is a club subset of $\omega_1$ belonging to $M$. 
	Then $|\mathcal D| \le |\mathcal Y| + \omega$. 
	Let $G$ be a filter on $\p$ in some outer model $W$ with $\omega_1^V = \omega_1^W$ 
	which meets each dense set in $\mathcal D$. 
	Define $z := \bigcup \{ x : \exists a \ (x,a) \in G \}$. 
 	Note that since $z$ is a union of open sets, it is open (using the fact that being open 
 	is absolute between $V$ and $W$). 
 	For each club $C \subseteq \omega_1$ which lies in $M$, there exists a condition $(x,a)$ which 
 	belongs to $G \cap E_C$, and thus $x \cap C \ne \emptyset$. 
 	Therefore, $z \cap C \ne \emptyset$. 
 	Hence, $z$ is $M$-stationary.

	It remains to show that 
	for all $X \in \mathcal Y$, $z \setminus X$ is bounded in $\delta$.
 	Consider $X \in \mathcal Y$.  
 	Then we can fix $(x,a) \in G \cap D_X$, which means that $X \in a$. 
 	Now the definition of the ordering on $\p$ together with the fact that $G$ is a filter easily 
 	implies that $z \setminus x \subseteq X$. 
 	Therefore, $z \setminus X \subseteq x$, and hence $z \setminus X$ is bounded in $\delta$. 
	\end{proof}

\begin{corollary}
	Assume that $M$ is suitable. 
	Let $\delta := M \cap \omega_1$. 
	Suppose that $\mathcal Y$ is a collection of less than 
	$\mathfrak{m}$($\sigma$-centered) many open subsets of $\delta$ such that 
	for any finite set $a \subseteq \mathcal Y$, $\bigcap a$ is $M$-stationary. 
	Then there exists a set $z \subseteq \delta$ which is open, $M$-stationary, and 
	satisfies that for all $X \in \mathcal Y$, $z \setminus X$ is bounded in $\delta$.
	\end{corollary}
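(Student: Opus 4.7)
The plan is to derive this corollary as an immediate consequence of Proposition 3.1, using the definition of the cardinal invariant $\mathfrak{m}(\sigma\text{-centered})$.

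First, I would apply Proposition 3.1 to the given collection $\mathcal{Y}$. This yields a $\sigma$-centered forcing poset $\p$ together with a family $\mathcal{D}$ of dense subsets of $\p$ with $|\mathcal{D}| \le |\mathcal{Y}| + \omega$, having the property that any filter on $\p$ meeting every member of $\mathcal{D}$ in an outer model $W$ preserving $\omega_1$ produces a set $z \subseteq \delta$ with all the desired properties.

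Next, I would observe that $|\mathcal{Y}| + \omega < \mathfrak{m}(\sigma\text{-centered})$. This uses two facts: that $|\mathcal{Y}| < \mathfrak{m}(\sigma\text{-centered})$ by hypothesis, and that $\mathfrak{m}(\sigma\text{-centered}) \ge \omega_1$ (since any countable collection of dense subsets of any forcing poset is met by some filter, via a straightforward diagonal construction). Hence $|\mathcal{D}| < \mathfrak{m}(\sigma\text{-centered})$.

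By the very definition of $\mathfrak{m}(\sigma\text{-centered})$, it follows that there exists a filter $G$ on $\p$ in $V$ meeting every dense set in $\mathcal{D}$. Taking $W = V$ (so the hypothesis $\omega_1^V = \omega_1^W$ is trivial), Proposition 3.1 furnishes a set $z \subseteq \delta$ which is open, $M$-stationary, and such that $z \setminus X$ is bounded in $\delta$ for every $X \in \mathcal{Y}$. There is really no obstacle here; the entire substance of the argument is already contained in Proposition 3.1, and the corollary is just the instantiation of that proposition inside $V$ under the cardinal assumption on $|\mathcal{Y}|$.
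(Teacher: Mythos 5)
Your proof is correct and is essentially identical to the paper's own argument: apply Proposition 3.1, note that $|\mathcal D| \le |\mathcal Y| + \omega < \mathfrak{m}(\sigma\text{-centered})$ since $\mathfrak{m}(\sigma\text{-centered})$ is uncountable, obtain a filter in $V$ meeting all the dense sets, and instantiate the proposition with $W = V$. Nothing further is needed.
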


\begin{proof}
	Fix a $\sigma$-centered forcing $\p$ and a collection 
	$\mathcal D$ of dense subsets of $\p$ of size at most $|\mathcal Y| + \omega$ as described in Proposition 3.1. 
	Since $\mathfrak{m}$($\sigma$-centered) is uncountable, 
	$| \mathcal D | < \mathfrak{m}(\textrm{$\sigma$-centered})$. 
	Hence, there exists a filter $G$ on $\p$ which meets each dense set in $\mathcal D$. 
	By Proposition 3.1, there exists a set 
	$z \subseteq \delta$ which is open, $M$-stationary, and 
	satisfies that for all $X \in \mathcal Y$, $z \setminus X$ is bounded in $\delta$.
\end{proof}

\begin{proposition}
	Let $\vec{\mathcal C} = \langle \mathcal C_\alpha : \alpha \in \omega_1 \cap \textrm{Lim} \rangle$ 
	be a sequence such that each $\mathcal C_\alpha$ is a collection of less than 
	$\mathfrak{m}$($\sigma$-centered) many 
	closed and cofinal subsets of $\alpha$. 
	Then there exists an open stationary set mapping $\Sigma$ such that, if $W$ is any 
	outer model with the same $\omega_1$ in which there exists a $\Sigma$-reflecting sequence, then 
	there exists in $W$ a club subset of $\omega_1$ which measures $\vec{\mathcal C}$.
	\end{proposition}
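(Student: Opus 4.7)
The plan is, for each suitable model $M$ with $\vec{\mathcal C} \in M$, to build $\Sigma(M)$ by applying Corollary 3.2 to a carefully chosen family $\mathcal{Y}_M$ of open subsets of $\delta := M \cap \omega_1$. Enumerate $\mathcal{C}_\delta$ as $\langle c_\zeta : \zeta < \tau \rangle$ with $\tau < \mathfrak{m}(\sigma\text{-centered})$, and construct $\mathcal{Y}_M$ by transfinite induction: at stage $\zeta$, place $\delta \setminus c_\zeta$ into $\mathcal{Y}_M$ if and only if for every finite $a \subseteq \mathcal{Y}_{<\zeta}$ the set $(\delta \setminus c_\zeta) \cap \bigcap a$ is $M$-stationary. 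By construction, every finite intersection from $\mathcal{Y}_M$ is $M$-stationary, so Corollary 3.2 yields an open $M$-stationary $z_M \subseteq \delta$ with $z_M \setminus X$ bounded in $\delta$ for each $X \in \mathcal{Y}_M$; set $\Sigma(M) := z_M$.

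Now suppose $\langle M_\eta : \eta < \omega_1 \rangle$ is a $\Sigma$-reflecting sequence in $W$, set $\gamma_\eta := M_\eta \cap \omega_1$, and let $D := \{\gamma_\eta : \eta < \omega_1\}$, a club of $\omega_1$ in $W$. For a limit $\eta$, put $\alpha := \gamma_\eta$; the reflecting property yields some $\beta^* < \eta$ with $\gamma_\xi \in \Sigma(M_\eta) = z_{M_\eta}$ for every $\beta^* \le \xi < \eta$. Fix $c \in \mathcal{C}_\alpha$. If $\delta \setminus c$ was included in $\mathcal{Y}_{M_\eta}$, then $z_{M_\eta} \cap c$ is bounded in $\alpha$, forcing $D \cap \alpha$ to be eventually disjoint from $c$ (the disjoint case of measuring). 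Otherwise $c$ was excluded at its stage $\zeta_c$ because some finite $a \subseteq \mathcal{Y}_{<\zeta_c}$ witnessed the failure of $M_\eta$-stationarity of $(\delta \setminus c) \cap \bigcap a$; unpacking, this produces a club $C \in M_\eta$ with $C \cap \alpha \subseteq c \cup \bigcup \{c' \in \mathcal{C}_\alpha : \delta \setminus c' \in a\}$.

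For this subset case, I invoke the standard closure-and-elementarity fact that if $C$ is a club of $\omega_1$ lying in some countable elementary $N \prec H(\theta)$, then $N \cap \omega_1 \in C$ (because $C \cap N$ is cofinal in $N \cap \omega_1$ by elementarity and $C$ is closed). By $\in$-continuity of the reflecting sequence, fix $\xi_0 < \eta$ with $C \in M_{\xi_0}$; then $C \in M_\xi$ and therefore $\gamma_\xi \in C$ for every $\xi_0 \le \xi < \eta$. Combined with the disjoint-measuring tail bound contributed by each $c'$ with $\delta \setminus c' \in \mathcal{Y}_{M_\eta}$, the tail of $D \cap \alpha$ is trapped in $C \cap \alpha \setminus \bigcup \{c' : \delta \setminus c' \in a\} \subseteq c$, yielding the subset case. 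The main hurdle is precisely this interplay: each exclusion from $\mathcal{Y}_M$ must be linked to a witness club $C \in M$, and then closure-and-elementarity along the reflecting sequence must combine with the already-secured disjoint-measuring bounds for the $c' \in a$ to force the tail of $D \cap \alpha$ into $c$.
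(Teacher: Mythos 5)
Your proof is correct and follows essentially the same route as the paper: the paper also builds a maximal subfamily $\mathcal{Y}_M\subseteq\{\delta\setminus c : c\in\mathcal C_\delta\}$ with the finite-intersection $M$-stationarity property (via Zorn's lemma rather than your transfinite recursion, which is an immaterial difference), applies Corollary 3.2 to get $z_M=\Sigma(M)$, and then splits into the same two cases (membership in $\mathcal Y_M$ giving the disjoint alternative via the reflecting sequence, exclusion giving the subset alternative via a witnessing club in $M$ combined with the bounds already obtained for the finitely many members of $a$). Your choice of $D$ as the trace of the reflecting sequence rather than its set of fixed points is likewise only a cosmetic variation.
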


\begin{proof}
	For each limit ordinal $\alpha < \omega_1$, let 
	$\mathcal D_\alpha := \{ \alpha \setminus c : c \in \mathcal C_\alpha \}$. 
	Observe that each $\mathcal D_\alpha$ is a collection of fewer than 
	$\mathfrak{m}$($\sigma$-centered) many open subsets of $\alpha$.
	
	We will define $\Sigma$ to have domain the collection of all countable 
	elementary substructures $M$ of $H(\omega_2)$. 
	Consider such an $M$ and we define $\Sigma(M)$. 
	Note that $M$ is suitable. 
	Let $\delta := M \cap \omega_1$. 
	We consider two cases. 
	In the first case, there does not exist a member of 
	$\mathcal D_\delta$ which is $M$-stationary. 
	Define $\Sigma(M) = \delta$, which is clearly open 
	and $M$-stationary.
	
	In the second case, 
	there exists some member of 
	$\mathcal D_\delta$ which is $M$-stationary. 
	A straightforward application of Zorn's lemma 
	implies that there exists a non--empty set 
	${\mathcal{Y}}_M \subseteq \mathcal D_\delta$ such that 
	for any $a \in [{\mathcal{Y}}_M]^{<\omega}$, 
	$\bigcap a$ is $M$-stationary, and moreover, 
	$\mathcal{Y}_M$ is a maximal subset of 
	$\mathcal D_\delta$ with this property. 
	Since $\mathcal{Y}_M \subseteq \mathcal D_\delta$, 
	$|\mathcal{Y}_M| < \mathfrak{m}(\textrm{$\sigma$-centered})$. 
	So the collection $\mathcal{Y}_M$ satisfies the 
	assumptions of Corollary 3.2. 
	It follows that there exists  
	a set $z_M \subseteq \delta$ which is open, 
	$M$-stationary, and satisfies that for all 
	$X \in \mathcal{Y}_M$, $z_M \setminus X$ 
	is bounded in $\delta$. 
	Now define $\Sigma(M) := z_M$.
	
	This completes the definition of $\Sigma$. 
	Consider an outer model $W$ of $V$ with the same $\omega_1$, and assume that in $W$ there exists 
	a $\Sigma$-reflecting sequence 
	$\langle M_\delta : \delta < \omega_1 \rangle$. 
	Let $\alpha_\delta := M_\delta \cap \omega_1$ 
	for all $\delta < \omega_1$. 
	Let $D$ be the club set of $\delta < \omega_1$ such that $\alpha_\delta = \delta$. 
	We claim that $D$ measures $\vec{\mathcal C}$.

	Consider $\delta \in \lim(D)$. 
	Then $\delta = \alpha_\delta = M_\delta \cap \omega_1$. 
	Let $M := M_\delta$. 
	We first claim that if $c \in \mathcal C_\delta$ and $\delta \setminus c$ is not $M$-stationary, 
	then for some $\beta < \delta$, 
	$(D \cap \delta) \setminus \beta \subseteq c$. 
	Fix a club subset $E$ of $\omega_1$ in $M$ which is disjoint from $\delta \setminus c$. 
	By the continuity of the $\Sigma$-reflecting sequence, there exists 
	$\beta < \delta$ such that $E \in M_\beta$. 
	We claim that $(D \cap \delta) \setminus \beta \subseteq c$. 
	Let $\xi \in (D \cap \delta) \setminus \beta$. 
	Then $E \in M_\xi$, and hence by elementarity, 
	$\xi = M_\xi \cap \omega_1 \in E$. 
	Since $E$ is disjoint from $\delta \setminus c$, $\xi \in c$.
		
	We split the argument according to the two cases in the definition of $\Sigma(M)$. 
	In the first case, there does not exist a member of $\mathcal D_\delta$ 
	which is $M$-stationary. 
	Consider $c \in \mathcal C_\delta$. 
	Then $\delta \setminus c$ is not $M$-stationary. 
	By the previous paragraph, 
	there exists $\beta < \delta$ such that $(D \cap \delta) \setminus \beta \subseteq c$.

	In the second case, there exists a member of $\mathcal D_\delta$ which is $M$-stationary. 
	Consider $c \in \mathcal C_\delta$. 
	Then $X := \delta \setminus c \in \mathcal D_\delta$. 
	We consider two possibilities. 
	First, assume that $X$ is in $\mathcal{Y}_M$. 
	By the choice of $\mathcal{Y}_M$ and $z_M$, we know that 
	$z_M \setminus X$ is bounded in $\delta$. 
	So fix $\beta_0 < \delta$ so that $z_M \setminus \beta_0 \subseteq X$. 
	By the definition of being a $\Sigma$-reflecting sequence, 
	there exists $\beta_1 < \delta$ so that for all $\beta_1 \le \xi < \delta$, 
	$M_\xi \cap \omega_1 \in \Sigma(M) = z_M$. 
	Let $\beta := \max \{ \beta_1, \beta_2 \}$. 
	Consider $\xi \in (D \cap \delta) \setminus \beta$. 
	Then $\xi \ge \beta_1$ implies that $\xi = M_\xi \cap \omega_1 \in z_M$. 
	So $\xi \in z_M \setminus \beta_0 \subseteq X = \delta \setminus c$.

	Secondly, assume that $X$ is not in $\mathcal{Y}_M$. 
	By the maximality of $\mathcal{Y}_M$, there exists a set $a \in [\mathcal{Y}_M]^{<\omega}$ 
	such that $X \cap \bigcap a$ is not $M$-stationary. 
	Fix a club $E$ in $M$ which is disjoint from 
	$X \cap \bigcap a$. 
	By the continuity of the $\Sigma$-reflecting sequence, there exists 
	$\beta < \delta$ such that $E \in M_\beta$. 
	Consider $\xi \in (D \cap \delta) \setminus \beta$. 
	Then $E \in M_\xi$, which implies that $\xi = M_\xi \cap \omega_1 \in E$. 
	Thus, $\xi$ is not in $X \cap \bigcap a$. 
	On the other hand, letting $a = \{ X_0, \ldots, X_n \}$, 
	for each $i \le n$ the previous paragraph implies that there exists  
	$\beta_i < \delta$ such that 
	$(D \cap \delta) \setminus \beta_i \subseteq X_i$. 
	Let $\beta^*$ be an ordinal in $\delta$ which is 
	larger than $\beta$ and $\beta_i$ for all $i \le n$. 
	Consider $\xi \in (D \cap \delta) \setminus \beta^*$. 
	Then by the choice of $\beta$, 
	$\xi \notin X \cap \bigcap a$. 
	By the choice of the $\beta_i$'s, $\xi \in \bigcap a$. 
	Therefore, $\xi \notin X = \delta \setminus c$, which means that 
	$\xi \in c$. 
	Thus, $(D \cap \delta) \setminus \beta^* \subseteq c$.
	\end{proof}

\begin{corollary}
	Assume \textsf{MRP} and \textsf{MA}($\sigma$-centered). 
	Then Strong Measuring holds.
	\end{corollary}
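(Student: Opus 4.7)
The plan is to observe that the corollary is essentially a direct combination of Proposition 3.3 with the hypotheses. First, I would fix an arbitrary sequence $\vec{\mathcal C} = \langle \mathcal C_\alpha : \alpha \in \omega_1 \cap \Lim \rangle$ such that each $\mathcal C_\alpha$ is a collection of fewer than $\mathfrak c$ many closed and cofinal subsets of $\alpha$. The goal is to produce a club $D \subseteq \omega_1$ measuring $\vec{\mathcal C}$.

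Next I would use the hypothesis \textsf{MA}($\sigma$-centered), which is equivalent to $\mathfrak m(\sigma\text{-centered}) = \mathfrak c$. Consequently, each $\mathcal C_\alpha$ has size fewer than $\mathfrak m(\sigma\text{-centered})$, so $\vec{\mathcal C}$ satisfies the hypothesis of Proposition 3.3. That proposition then supplies an open stationary set mapping $\Sigma$ with the property that, in any outer model $W$ with $\omega_1^W = \omega_1^V$ in which a $\Sigma$-reflecting sequence exists, one can find (in $W$) a club subset of $\omega_1$ measuring $\vec{\mathcal C}$.

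Finally I would apply \textsf{MRP}, which provides a $\Sigma$-reflecting sequence already in $V$. Taking $W = V$ in Proposition 3.3 then yields the desired club $D \subseteq \omega_1$ measuring $\vec{\mathcal C}$ inside $V$. Since $\vec{\mathcal C}$ was arbitrary, \textsf{Measuring}$_{<\mathfrak c}$ (i.e., Strong Measuring) holds.

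There is no real obstacle here: the work has been done in the preceding propositions. The only thing to be slightly careful about is matching the cardinality hypothesis --- Proposition 3.3 is stated in terms of $\mathfrak m(\sigma\text{-centered})$ rather than $\mathfrak c$, but under \textsf{MA}($\sigma$-centered) these coincide, so ``fewer than $\mathfrak c$'' is exactly the right input. Thus the proof fits into a couple of lines.
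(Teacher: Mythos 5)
Your proposal is correct and follows exactly the same route as the paper's own proof: fix the sequence, use \textsf{MA}($\sigma$-centered) to convert the ``fewer than $\mathfrak c$'' hypothesis into ``fewer than $\mathfrak{m}$($\sigma$-centered)'', invoke Proposition 3.3, and then apply \textsf{MRP} with $W = V$. Nothing is missing.
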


\begin{proof}
	Let $\vec{\mathcal C} = \langle \mathcal C_\alpha : 
	\alpha \in \omega_1 \cap \textrm{Lim} \rangle$ 
	be a sequence such that each $\mathcal C_\alpha$ 
	is a collection of fewer than $\mathfrak c$ 
	many closed and cofinal subsets of $\alpha$. 
	We claim that there exists a club subset of $\omega_1$ which measures $\vec{\mathcal C}$. 
	By 
	\textsf{MA}($\sigma$-centered), 
	$\mathfrak{m}$($\sigma$-centered) equals $\mathfrak c$. 
	So each $\mathcal C_\alpha$ has size less 
	than $\mathfrak{m}$($\sigma$-centered).
	
	By Proposition 3.3, 
	there exists an open stationary set mapping 
	$\Sigma$ such that, if $W$ is any 
	outer model with the same $\omega_1$ in which there exists a $\Sigma$-reflecting sequence, then 
	there exists in $W$ a club subset of $\omega_1$ which measures $\vec{\mathcal C}$. 
	Applying \textsf{MRP}, there exists 
	a $\Sigma$-reflecting sequence in $V$. 
	Thus, in $V$ there exists a club subset of 
	$\omega_1$ which measures $\vec{\mathcal C}$.
	\end{proof}
 
\begin{corollary}
	Assume \textsf{BPFA}. 
	Then Strong Measuring holds.
	\end{corollary}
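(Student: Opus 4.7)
The plan is to combine Proposition 3.3 with Bagaria's $\Sigma_1$-absoluteness characterization of \textsf{BPFA}. Since \textsf{BPFA} implies Martin's Axiom, in particular \textsf{MA}($\sigma$-centered), we have $\mathfrak{m}(\sigma\text{-centered}) = \mathfrak c = \omega_2$. Given any sequence $\vec{\mathcal C} = \langle \mathcal C_\alpha : \alpha \in \omega_1 \cap \Lim \rangle$ with each $\mathcal C_\alpha$ a family of fewer than $\mathfrak c$ closed cofinal subsets of $\alpha$, each $\mathcal C_\alpha$ thus has size strictly less than $\mathfrak{m}(\sigma\text{-centered})$, so the hypothesis of Proposition 3.3 is met in $V$.

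Applying Proposition 3.3 inside $V$, I would produce an open stationary set mapping $\Sigma$ with the property that any outer model $W$ of $V$ with $\omega_1^V = \omega_1^W$ in which there exists a $\Sigma$-reflecting sequence contains a club of $\omega_1$ measuring $\vec{\mathcal C}$. By the fact recalled in Section 1, there exists a proper forcing $\p \in V$ that adds a $\Sigma$-reflecting sequence; hence in $V^{\p}$ there is a club of $\omega_1$ measuring $\vec{\mathcal C}$.

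To descend to $V$ I would invoke Bagaria's characterization. Since each $\mathcal C_\alpha$ is a family of at most $\omega_1$ many countable subsets of $\omega_1$, the parameter $\vec{\mathcal C}$ lies in $H(\omega_2)$. The assertion ``there exists a club $D \subseteq \omega_1$ that measures $\vec{\mathcal C}$'' is $\Sigma_1$ over $H(\omega_2)$ with parameter $\vec{\mathcal C}$, since both ``$D$ is a club of $\omega_1$'' and ``$D \cap \alpha$ measures each $c \in \mathcal C_\alpha$ for every $\alpha \in \lim(D)$'' unfold to $\Delta_0$ assertions over $H(\omega_2)$ once $\omega_1$ is taken as a bound on the relevant quantifiers. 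Since this $\Sigma_1$ statement holds in the proper extension $V^{\p}$, \textsf{BPFA} forces it to hold in $V$ already, giving Strong Measuring.

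The point requiring the most care is that the open stationary set mapping $\Sigma$ is itself a proper class of $V$ and cannot appear as a parameter in the $\Sigma_1$-absoluteness step; however, this is only a superficial obstacle, since $\Sigma$ and $\p$ serve purely as auxiliary devices constructed in $V$ to exhibit a proper extension in which the target $\Sigma_1$ statement is witnessed. Only $\vec{\mathcal C} \in H(\omega_2)$ is needed as a parameter for the final absoluteness argument, which is what allows the Bagaria characterization to bring the existence of a measuring club back down to $V$.
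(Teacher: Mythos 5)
Your proposal is correct and follows essentially the same route as the paper: reduce to Proposition 3.3 via \textsf{BPFA} $\Rightarrow$ \textsf{MA}($\sigma$-centered), pass to a proper extension with a $\Sigma$-reflecting sequence, and pull the measuring club back to $V$ by Bagaria's $\Sigma_1$-absoluteness characterization with parameter $\vec{\mathcal C} \in H(\omega_2)$. (One small quibble: $\Sigma$ is a set, not a proper class, since its domain is the set of countable elementary substructures of $H(\omega_2)$; but as you say, it is not needed as a parameter in any case.)
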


\begin{proof}
	Let $\vec{\mathcal C} = \langle \mathcal C_\alpha : 
	\alpha \in \omega_1 \cap \textrm{Lim} \rangle$ 
	be a sequence such that each $\mathcal C_\alpha$ 
	is a collection of fewer than $\mathfrak c = \omega_2$ 
	many closed and cofinal subsets of $\alpha$. 
	We claim that there exists a 
	club subset of $\omega_1$ which measures 
	$\vec{\mathcal C}$. 
	Since $\mathfrak c = \omega_2$, $\vec{\mathcal C}$ is a member of $H(\omega_2)$. 
	Thus, the existence of a club subset of $\omega_2$ which measures $\vec{\mathcal C}$ 
	is expressible as a $\Sigma_1$ statement involving a parameter in $H(\omega_2)$. 
	By \textsf{BPFA}, it suffices to show that there 
	exists a proper forcing which forces that such a club exists.
	
	Now \textsf{BPFA} implies Martin's Axiom, and in particular, that 
	$\mathfrak{m}$($\sigma$-centered) is equal to $\mathfrak c$. 
	So each $\mathcal C_\alpha$ has size less 
	than $\mathfrak{m}$($\sigma$-centered). 
	By Proposition 3.3, 
	there exists an open stationary set mapping 
	$\Sigma$ such that, if $W$ is any 
	outer model with the same $\omega_1$ in which there exists a $\Sigma$-reflecting sequence, then 
	there exists in $W$ a club subset of $\omega_1$ which measures $\vec{\mathcal C}$. 
	By \cite[Section 3]{moore}, there exists a proper forcing $\p$ which adds 
	a $\Sigma$-reflecting sequence, so in $V^\p$ there is a club subset of $\omega_1$ 
	which measures $\vec{\mathcal C}$. 
\end{proof}

We now sketch a proof that \textsf{MRP} alone does not imply Strong Measuring. 
In particular, Measuring does not imply Strong Measuring. 
Start with a model of \textsf{CH} in which there exists a supercompact cardinal $\kappa$. 
Construct a forcing iteration $\p$ in the standard way to obtain a model of \textsf{MRP}. 
To do this, fix a Laver function $f : \kappa \to V_\kappa$. 
Then define a countable support forcing iteration 
$\langle \p_\alpha, \dot \q_\beta : \alpha \le \kappa, \beta < \kappa \rangle$ as follows. 
Given $\p_\alpha$, consider $f(\alpha)$. 
If $f(\alpha)$ happens to be a $\p_\alpha$-name for some open stationary set mapping, then let 
$\dot \q_\alpha$ be a $\p_\alpha$-name for a proper forcing which adds an $f(\alpha)$-reflecting sequence. 
Otherwise let $\dot \q_\alpha$ be a $\p_\alpha$-name for $Col(\omega_1,\omega_2)$. 
Now define $\p := \p_\kappa$. 
Arguments similar to those in the standard construction of a model of 
\textsf{PFA} can be used to show that $\p$ forces \textsf{MRP}.

The forcing for adding a $\Sigma$-reflecting sequence for a given open stationary set mapping 
does not add reals (\cite[Section 3]{moore}). 
In particular, it is vacuously $\omega^\omega$-bounding. 
The property of being proper and $\omega^\omega$-bounding is preserved under countable 
support forcing iterations (\cite[Theorem 3.5]{abraham}), so $\p$ is also $\omega^\omega$-bounding. 
In particular, $V \cap \omega^\omega$ is an unbounded family in $V^\p$, and it has size $\omega_1$ 
since \textsf{CH} holds in $V$. 
It follows that the bounding number $\mathfrak b$ is equal to $\omega_1$. 
But by Corollary 2.9, \textsf{Measuring}$_{\mathfrak b}$ is false. 
So $\p$ forces that $\textsf{Measuring}_{\omega_1}$ is false. 
As $\mathfrak c = \omega_2$ in $V^\p$, Strong Measuring fails in $V^\p$.

We also note that Strong Measuring plus $\mathfrak c = \omega_2$ is consistent with 
the existence of an $\omega_1$-Suslin tree. 
Namely, both the forcing for adding a $\Sigma$-reflecting sequence for a given 
open stationary set mapping $\Sigma$, as well as any $\sigma$-centered forcing, 
preserve Suslin trees (\cite{miyamoto1}). 
And the property of being proper and preserving a 
given Suslin tree is preserved under countable support forcings iterations (\cite{miyamoto2}). 
So starting with a model in which there exists an $\omega_1$-Suslin tree $S$ and a supercompact cardinal $\kappa$, 
we can iterate forcing similar to the argument in the preceding paragraphs to produce 
a model of \textsf{MA}($\sigma$-centered) plus \textsf{MRP} in which $S$ is an 
$\omega_1$-Suslin tree. 
By Corollary 3.4, Strong Measuring holds in that model.

\section{Measuring Without the Axiom of Choice}

Another natural way to strengthen $\Measuring$ is to allow, in the sequence to be measured, not just closed sets, but also sets of higher Borel complexity. This line of strengthenings of $\Measuring$ was also con\-si\-dered in \cite{aspero3}. For completeness, we are including here the correspon\-ding observations.

The version of $\Measuring$ where one considers sequences $\vec X=\langle X_\alpha\,:\,\alpha\in\omega_1\cap \Lim\rangle$, with each $X_\alpha$ an open subset of $\alpha$ in the order topology, is of course equivalent to $\Measuring$. A natural next step would therefore be to consider sequences in which each $X_\alpha$ is a countable union of closed sets. This is obviously the same as allowing each $X_\alpha$ to be an arbitrary subset of $\alpha$. Let us call the corresponding statement $\Measuring^\ast$:

\begin{definition} $\Measuring^\ast$ holds if and only if for every sequence $\vec X=\langle X_\alpha\,:\,\alpha\in\omega_1\cap \Lim\rangle$, if $X_\alpha\subseteq\alpha$ for each $\alpha$, then there is some club $D\subseteq\o_1$ such that for every limit point $\delta\in D$ of $D$, $D\cap\delta$ measures $X_\delta$.
\end{definition}

It is easy to see that 
$\Measuring^\ast$  is false in $\textsf{ZFC}$.
In fact, given a stationary and co-stationary $S\subseteq\omega_1$, there is no club of $\omega_1$ measuring $\vec X=\langle S\cap\alpha\,:\,\alpha\in\omega_1\cap \Lim\rangle$. The reason is that if $D$ is any club of $\omega_1$, then both $D\cap S\cap\delta$ and $(D\cap\delta)\setminus S$ are cofinal subsets of $\delta$ for each $\delta$ in the club of limit points in $\omega_1$ of both $D\cap S$ and $D\setminus S$. 

The status of $\Measuring^\ast$ is more interesting in the absence of the Axiom of Choice. Let $\mathcal C_{\omega_1}=\{X\subseteq\omega_1\,:\,C \subseteq X\mbox{ for some club $C$ of $\omega_1$}\}$.

\begin{observation}  ($\textsf{ZF}$+ $\mathcal C_{\omega_1}$ is a normal filter on $\omega_1$) 
	Suppose $\vec X=\langle X_\delta\,:\,\delta\in\omega_1\cap \Lim\rangle$ is such that
	
	\begin{enumerate}
		\item  $X_\delta\subseteq\delta$ for each $\delta$.
		\item For each club $C\subseteq\omega_1$,
		
		\begin{enumerate}
			
			\item there is some $\delta\in C$ such that $C\cap X_\delta\neq\emptyset$, and
			
			\item there is some $\delta\in C$ such that $(C\cap\delta)\setminus X_\delta\neq\emptyset$.
			
		\end{enumerate}
		
	\end{enumerate}
	
	Then there is a stationary and co-stationary subset of $\omega_1$ definable from $\vec X$.
	
\end{observation}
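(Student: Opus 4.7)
The plan is to analyze, for each $\alpha<\omega_1$, the definable set $T_\alpha:=\{\delta\in\omega_1\cap\Lim:\alpha\in X_\delta\}$, which is contained in $(\alpha,\omega_1)$ since each $X_\delta\subseteq\delta$, and to use normality of $\mathcal{C}_{\omega_1}$ together with hypothesis (2) to extract a stationary and co-stationary set definable from $\vec X$. First define
\[
A:=\{\alpha<\omega_1:T_\alpha\in\mathcal{C}_{\omega_1}\},\qquad B:=\{\alpha<\omega_1:\omega_1\setminus T_\alpha\in\mathcal{C}_{\omega_1}\}.
\]
These are disjoint, since $\alpha\in A\cap B$ would force $\emptyset=T_\alpha\cap(\omega_1\setminus T_\alpha)\in\mathcal{C}_{\omega_1}$, contradicting that $\mathcal{C}_{\omega_1}$ is a proper filter. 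Note also that $\alpha\notin A\cup B$ is equivalent to $T_\alpha$ being both stationary and co-stationary in $\omega_1$.

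The argument then splits into two cases. If there is some $\alpha\notin A\cup B$, take the least such $\alpha$; then $T_\alpha$ itself is the required stationary and co-stationary set, definable from $\vec X$. Otherwise $\omega_1=A\cup B$, and I claim that $A$ is as required. To prove the claim, for each $\alpha<\omega_1$ set $E_\alpha:=T_\alpha\cup[0,\alpha]$ when $\alpha\in A$, and $E_\alpha:=(\omega_1\setminus T_\alpha)\cup[0,\alpha]$ when $\alpha\in B$. In either case $E_\alpha\in\mathcal{C}_{\omega_1}$, so by normality the diagonal intersection $\triangle_{\alpha<\omega_1}E_\alpha$ contains some club $C^*$. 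For $\delta\in C^*$ and $\alpha<\delta$, membership of $\delta$ in $E_\alpha$ together with $\delta>\alpha$ forces $\alpha\in A\Leftrightarrow\alpha\in X_\delta$; hence $X_\delta=A\cap\delta$ for every $\delta\in C^*$.

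Given any club $D\subseteq\omega_1$, apply hypothesis (2) to the club $D\cap C^*$: clause (a) yields $\delta\in D\cap C^*$ with $(D\cap C^*)\cap X_\delta\neq\emptyset$, which because $X_\delta=A\cap\delta$ gives $D\cap A\neq\emptyset$; clause (b) yields $\delta\in D\cap C^*$ with $((D\cap C^*)\cap\delta)\setminus X_\delta\neq\emptyset$, which because $\omega_1\setminus A=B$ in this case gives $D\cap(\omega_1\setminus A)\neq\emptyset$. Hence both $A$ and its complement meet every club, so $A$ is stationary and co-stationary, and it is manifestly definable from $\vec X$.

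The main technical obstacle is that, in the absence of the Axiom of Choice, one cannot just pick, for each $\alpha\in A\cup B$, a witnessing club $C_\alpha\subseteq T_\alpha$ or $C_\alpha\subseteq\omega_1\setminus T_\alpha$ and take their intersection; what is needed is a single club on which $X_\delta$ is uniformly identified with $A\cap\delta$, and this is precisely what normality of $\mathcal{C}_{\omega_1}$ delivers through the diagonal intersection. The case distinction is what prevents the argument from collapsing when some individual $T_\alpha$ is already stationary and co-stationary and $A$ itself need not be.
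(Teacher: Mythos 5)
Your proof is correct and follows essentially the same route as the paper: the same dichotomy (either some $T_\alpha$ is already stationary and co-stationary, or every $T_\alpha$ is decided by the filter), followed by a diagonal intersection via normality to get a club on which $X_\delta$ coheres, and then hypothesis (2) to verify stationarity and co-stationarity. The only cosmetic difference is that you name the resulting set directly as $A=\{\alpha:T_\alpha\in\mathcal C_{\omega_1}\}$, whereas the paper presents it as $\bigcup_{\delta\in W^\ast}X_\delta$; these coincide on the relevant club.
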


\begin{proof}
	We have two possible cases. The first case is that in which for all $\alpha<\omega_1$, either
	
	\begin{itemize}
		
		\item  $W_\alpha^0=\{\delta<\omega_1\,:\,\alpha\notin X_\delta\}$ is in $\mathcal C_{\omega_1}$, or
		
		\item $W_\alpha^1=\{\delta<\omega_1\,:\,\alpha\in X_\delta\}$ is in $\mathcal C_{\omega_1}$.
		
	\end{itemize}

	For each $\alpha<\omega_1$, let $W_\alpha$ be $W_\alpha^\epsilon$ for the unique $\epsilon\in\{0, 1\}$ such that $W_\alpha^\epsilon\in \mathcal C_{\omega_1}$, and let $W^\ast=\Delta_{\alpha<\omega_1} W_\alpha\in\mathcal C_{\omega_1}$. Then $X_{\delta_0}=X_{\delta_1}\cap\delta_0$ for all $\delta_0<\delta_1$ in $W^\ast$. It then follows, by (2), that $S=\bigcup_{\delta\in W^\ast}X_\delta$, which of course is definable from $\vec C$, is a stationary and co-stationary subset of $\omega_1$. Indeed, suppose $C\subseteq\omega_1$ is a club, and let us fix a club $D\subseteq W^\ast$. There is then some $\delta\in C\cap D$ and some $\alpha\in C\cap D\cap X_\delta$. But then $\alpha\in S$ since $\delta\in W^\ast$ and $\alpha\in W^\ast\cap X_\delta$.  There is also some $\delta\in C\cap D$ and some $\alpha\in C\cap D$ such that $\alpha\notin X_\delta$, which implies that $\alpha\notin S$ by a symmetrical argument, using the fact that $X_{\delta_0}=X_{\delta_1}\cap\delta_0$ for all $\delta_0<\delta_1$ in $W^\ast$.
	
	The second possible case is that there is some $\alpha<\omega_1$ with the property that both $W^0_\alpha$ and $W^1_\alpha$ are stationary subsets of $\omega_1$. But now we can let $S$ be $W^0_\alpha$, where $\alpha$ is first such that $W^0_\alpha$ is stationary and co-stationary. 
\end{proof}

It is worth comparing the above observation with Solovay's classic result that an $\omega_1$--sequence of pairwise disjoint stationary subsets of $\omega_1$ is definable from any given ladder system on $\omega_1$ (working in the same theory).

\begin{corollary}\label{measuring-ast} ($\textsf{ZF}$+ $\mathcal C_{\omega_1}$ is a normal filter on $\omega_1$) The following are equivalent.
	
	\begin{enumerate}
		
		\item $\mathcal C_{\omega_1}$ is an ultrafilter on $\omega_1$;
		
		\item $\Measuring^\ast$;

		\item 
		For every sequence 
		$\langle X_\alpha\,:\,\alpha\in\omega_1\cap\Lim \rangle$, if $X_\alpha\subseteq\alpha$ for each $\alpha$, 
		then there is a club $C\subseteq\omega_1$ such that either 
		
		\begin{itemize}
			
			\item $C\cap\delta\subseteq X_\delta$ for every $\delta\in C$, or
			\item $C\cap X_\delta=\emptyset$ for every $\delta\in C$.
		\end{itemize}
		 
	\end{enumerate}
	
\end{corollary}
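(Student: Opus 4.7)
The plan is to establish the cycle $(1) \Rightarrow (3) \Rightarrow (2) \Rightarrow (1)$, since the implication $(3) \Rightarrow (2)$ is routine and the content of both $(1) \Leftrightarrow (2)$ and $(2) \Leftrightarrow (3)$ can be factored cleanly through $(3)$ together with the material already established in this section.

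For $(1) \Rightarrow (3)$, I would simply contrapose Observation 4.2. Under $(1)$ there is no subset of $\omega_1$ that is both stationary and co-stationary, so the conclusion of the Observation fails for every sequence $\vec X$ with $X_\alpha \subseteq \alpha$; hence at least one of clauses $(2\mathrm{a})$ and $(2\mathrm{b})$ of the Observation must also fail. Failure of $(2\mathrm{a})$ produces a club $C$ with $C \cap X_\delta = \emptyset$ for every $\delta \in C$, which is the second alternative of $(3)$. Failure of $(2\mathrm{b})$ produces a club $C$ with $(C \cap \delta) \setminus X_\delta = \emptyset$ for every $\delta \in C$, that is $C \cap \delta \subseteq X_\delta$, the first alternative of $(3)$. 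For $(3) \Rightarrow (2)$, apply $(3)$ to $\vec X$ to obtain a club $C$; at every $\delta \in \lim(C) \cap C$ the cofinal set $C \cap \delta$ is, by the dichotomy in $(3)$, either wholly contained in $X_\delta$ or wholly disjoint from $X_\delta$, so $\beta = 0$ witnesses that $C \cap \delta$ measures $X_\delta$.

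For $(2) \Rightarrow (1)$, I would contrapose, reusing the example discussed right after Definition 4.1. If $\mathcal C_{\omega_1}$ is not an ultrafilter, pick $S \subseteq \omega_1$ such that neither $S$ nor $\omega_1 \setminus S$ contains a club, and set $\vec X := \langle S \cap \alpha : \alpha \in \omega_1 \cap \Lim \rangle$. Given any club $D$, both $D \cap S$ and $D \setminus S$ meet every club of $\omega_1$ and are therefore unbounded, so each has a club of limit points in $\omega_1$; pick $\delta \in \lim(D) \cap \lim(D \cap S) \cap \lim(D \setminus S)$ and observe that for any $\beta < \delta$ the tail $(D \cap \delta) \setminus \beta$ meets both $S \cap \delta$ and its complement in $\delta$, so $D \cap \delta$ cannot measure $X_\delta$. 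Hence no club measures $\vec X$ and $\Measuring^\ast$ fails. The one technical point worth flagging is that this last paragraph relies on two facts that must be verified inside the weak ambient theory $\textsf{ZF}$ + ``$\mathcal C_{\omega_1}$ is a normal filter'': that the set of limit points of an unbounded subset of $\omega_1$ is a club, proved by the choice-free recipe of iterating least elements to build a definable $\omega$-sequence, and that $D \cap S$ is unbounded whenever $D$ is a club and $S$ is stationary. Both are elementary but worth explicit mention in this setting.
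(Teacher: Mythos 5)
Your proposal is correct and follows essentially the same route as the paper: the cycle $(1)\Rightarrow(3)$ by contraposing Observation 4.2, the trivial $(3)\Rightarrow(2)$, and $(2)\Rightarrow(1)$ by contraposition via the stationary/co-stationary example given right after the definition of $\Measuring^\ast$. Your explicit flagging of the choice-free verifications needed in the ambient theory is a welcome addition but does not change the argument.
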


\begin{proof} (3) trivially implies (2), and by the observation (1) implies (3). Finally, to see that (2) implies (1), note that the argument right after the definition of $\Measuring^\ast$ uses only $\textsf{ZF}$ together with the regularity of 
	$\omega_1$ and the negation of (1). 
\end{proof}

In particular, the strong form of $\Measuring^\ast$ given by (3) in the above observation follows from 
$\textsf{ZF}$ together with the Axiom of Determinacy. 

We finish this digression into set theory without the Axiom of Choice 
by observing that any attempt to parametrize $\Measuring^\ast$, in the same vein as we did with 
\textsf{Measuring}, gives rise to principles vacuously equivalent to $\Measuring^\ast$ itself, at least when the parametrization is done with the alephs.\footnote{This was pointed out by Asaf Karagila.} 

Specifically, given an aleph $\kappa$, let us define $\Measuring^*_\kappa$ as the statement that for every sequence $\langle\mathcal{X}_\alpha\,:\,\alpha\in\omega_1\cap\Lim\rangle$, if each $\mathcal{X}_\alpha$ is a set of cardinality at most $\kappa$ consisting of subsets of $\alpha$, then there is a club $D\subseteq \omega_1$ such that for every limit point $\delta\in D$ of $D$, $D\cap\delta$ measures $X$ for all $X\in\mathcal{X}_\delta$. Then $\Measuring^\ast_{\omega}$ is clearly equivalent to $\Measuring^\ast$ under 
$\textsf{ZF}$ together with the normality of $\mathcal C_{\omega_1}$ and the Axiom of Choice for countable families of subsets of $\omega_1$ (which of course follows from the Axiom of Choice for countable families of sets of reals, and therefore also from $\textsf{ZF} + \textsf{AD}$). On the other hand, working in 
$\textsf{ZF}$ + $\mathcal C_{\omega_1}$ is a normal filter on $\omega_1$, we have that $\Measuring^\ast_{\omega_1}$ follows vacuously from $\Measuring^\ast$ simply because under $\Measuring^\ast$ there is no sequence $\langle\mathcal{X}_\alpha\,:\,\alpha\in\omega_1\cap\Lim\rangle$ as in the definition of 
$\Measuring^\ast_{\omega_1}$ and such that $\arrowvert \mathcal{X}_\alpha\arrowvert=\omega_1$ for some $\alpha$; indeed, $\Measuring^\ast$ implies, over this base theory, that $\mathcal C_{\omega_1}$ is an ultrafilter (Corollary \ref{measuring-ast}), and if $\mathcal C_{\omega_1}$ is an ultrafilter then there is no $\omega_1$-sequence of distinct reals, whereas the existence of a family of size $\omega_1$ consisting of subsets of some fixed countable ordinal clearly implies that there is such a sequence.

\bigskip

We conclude the article with two natural questions.

\begin{question}
	Is \textsf{Measuring}$_{\mathfrak p}$ false?
\end{question}

\begin{question} Are Measuring and Strong Measuring equivalent statements assu\-ming Martin's Axiom?
\end{question}

\bibliographystyle{plain}
\bibliography{measuring}

\end{document}